\documentclass{amsart}

\usepackage{amsfonts,amsmath,amssymb,amsthm,amscd,amsxtra}
\usepackage{enumerate,verbatim}

\usepackage[all,2cell,ps]{xy}
\usepackage[pagebackref]{hyperref}
\usepackage{mathptmx}

\theoremstyle{plain} 

\newtheorem{thm}{Theorem}[section]

\newtheorem{cor}[thm]{Corollary}

\theoremstyle{definition}
\newtheorem{lem}[thm]{Lemma}
\newtheorem{dfn}[thm]{Definition}

\newtheorem{eg}[thm]{Example}

\numberwithin{equation}{section}

\newcommand{\fp}{\mathfrak{p}}

\newcommand{\ZZ}{\mathbb{Z}}

\DeclareMathOperator{\G}{\mathcal{G}}

\DeclareMathOperator{\Ass}{Ass}

\DeclareMathOperator{\gd}{\mathcal{G}\textnormal{-dim}}

\DeclareMathOperator{\Tr}{Tr}
\DeclareMathOperator{\Tt}{\widehat{Tor}}
\DeclareMathOperator{\gt}{\mathcal{G}Tor}

\DeclareMathOperator{\coker}{coker}

\DeclareMathOperator{\depth}{depth}

\DeclareMathOperator{\Ext}{Ext}

\DeclareMathOperator{\hh}{H}
\DeclareMathOperator{\Hom}{Hom}

\DeclareMathOperator{\id}{id}

\DeclareMathOperator{\pd}{pd}

\DeclareMathOperator{\Spec}{Spec}

\DeclareMathOperator{\Tor}{Tor}

\def\urltilda{\kern -.15em\lower .7ex\hbox{\~{}}\kern .04em}
\def\urldot{\kern -.10em.\kern -.10em}\def\urlhttp{http\kern -.10em\lower -.1ex
\hbox{:}\kern -.12em\lower 0ex\hbox{/}\kern -.18em\lower 0ex\hbox{/}}

\begin{document}

\title[On the depth of tensor products of modules ]{On the depth of tensor products of modules}

\author[A. Sadeghi]{Arash Sadeghi}
\address{School of Mathematics, Institute for Research in Fundamental Sciences (IPM), P.O. Box: 19395-5746, Tehran, Iran} \email{sadeghiarash61@gmail.com}

\keywords{depth formula, $\G$-relative homology, Tate homology.\\
This research was supported by a grant from IPM}
\subjclass[2010]{13D07; 13D05}

\begin{abstract}
The depth of tensor product of modules over a Gorenstein local ring
is studied. For finitely generated nonzero modules $M$ and $N$ over a
Gorenstein local ring $R$, under some assumptions on the vanishing
of finite number of Tate and relative homology modules, the
$\depth_R(M\otimes_RN)$ is determined in terms of the $\depth_R(M)$
and $\depth_R(N)$.
\end{abstract}

\maketitle{}


\section{Introduction}
For finitely generated modules $M$ and $N$ over a commutative noetherian local ring $R$, the pair $(M,N)$ is said to satisfy the \emph{depth formula} provided
$$\depth R+\depth_R(M\otimes_RN)=\depth_R(M)+\depth_R(N).$$
The depth formula was first studied by Auslander. In \cite{A2}, he proved that the pair $(M,N)$ satisfies the depth formula provided that the projective dimension of $M$ is finite and $M$ and $N$ are Tor-independent, i.e., 
$\Tor^R_i(M,N)=0$ for all $i>0$.
Three decades later Huneke and Wiegand proved that the depth formula holds for 
Tor-independent modules over complete intersection local rings \cite{HW}.
Araya and Yoshino \cite{AY}, and Iyengar \cite{I} independently generalized Auslander's result for  modules of finite complete intersection dimension.
The finite complete intersection dimension condition was then relaxed by Christensen and Jorgensen \cite{CJ}. More precisely, they proved that the pair $(M,N)$ satisfies the depth formula provided that the Gorenstein dimension of $M$ is finite, $\Tor^R_i(M,N)=0$ for all $i>0$ and $\Tt_i^R(M,N)=0$ for all $i\in\ZZ$.
Recently, Celikbas, Liang and Sadeghi established that the depth formula holds under weaker assumptions \cite{CLS}:
\begin{thm}\label{CLS}
Let $M$ and $N$ be $R$--modules such that $M$ has finite Gorenstein dimension. Then
the depth formula holds provided the following conditions hold.
\begin{enumerate}[(i)]
\item{$\gt_i^R(M,N)=0$ for all $i>0$.}
\item{$\Tt_i^R(M,N)=0$ for all $i\leq0$.}
\end{enumerate}
\end{thm}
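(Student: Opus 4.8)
The plan is to prove the formula by induction on $g:=\gd_R M$, with the totally reflexive case $g=0$ as base case, building on the Avramov--Martsinkovsky-type exact sequence tying together $\gt^R$, $\Tt^R$ and $\Tor^R$. First I would normalize the goal: by the Auslander--Bridger formula $\depth_R M=\depth R-g$, so the assertion is equivalent to $\depth_R(M\otimes_R N)=\depth_R N-g$, and in particular the hypotheses ought to force $\depth_R N\ge g$ (I expect this to emerge from (i)). Plugging (i) and (ii) into the long exact sequence
$$\cdots\lra\gt_{n+1}^R(M,N)\lra\Tt_n^R(M,N)\lra\Tor_n^R(M,N)\lra\gt_n^R(M,N)\lra\Tt_{n-1}^R(M,N)\lra\cdots$$
gives $\Tt_n^R(M,N)\cong\Tor_n^R(M,N)$ for every $n\ge1$, $\Tt_n^R(M,N)=0$ for every $n\le 0$, and that the natural map $M\otimes_R N\to\gt_0^R(M,N)$ is an isomorphism. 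I stress that $\Tor_n^R(M,N)$ for $n\ge1$ need not vanish under the hypotheses; this is precisely why the argument must be carried by a proper $\mathcal G$-projective resolution of $M$, which is insensitive to these $\Tor$'s, rather than by a projective one.

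For the base case $g=0$, $M$ is totally reflexive; fix a complete resolution $T$ (a totally acyclic complex of finitely generated free modules with $M\cong\coker(T_1\to T_0)$), so that $\Tt_n^R(M,N)=\hh_n(T\otimes_R N)$. The vanishing $\Tt^R_{\le 0}(M,N)=0$ unwinds, through the acyclicity of the nonpositive part of $T\otimes_R N$, to exactness of
$$0\lra M\otimes_R N\lra T_{-1}\otimes_R N\lra T_{-2}\otimes_R N\lra\cdots,$$
a coresolution of $M\otimes_R N$ by modules each a finite direct sum of copies of $N$; chasing depth down this coresolution yields the easy inequality $\depth_R(M\otimes_R N)\ge\depth_R N$. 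For the reverse inequality I would pass to the dual: $T^{\vee}:=\Hom_R(T,R)$ is, up to a shift, a complete resolution of $M^{\vee}:=\Hom_R(M,R)$, and the natural isomorphism $T\otimes_R N\cong\Hom_R(T^{\vee},N)$ recasts (ii) as $\widehat{\Ext}^{\,j}_R(M^{\vee},N)=0$ for all $j\ge -1$; since $M^{\vee}$ is totally reflexive, $\widehat{\Ext}$ agrees with $\Ext$ in positive degrees, so $\Ext^{\,j}_R(M^{\vee},N)=0$ for all $j>0$. Auslander's transpose exact sequence
$$0\lra\Ext^1_R(\Tr M,N)\lra M\otimes_R N\lra\Hom_R(M^{\vee},N)\lra\Ext^2_R(\Tr M,N)\lra 0,$$
whose outer terms are identified, via cosyzygies of $M^{\vee}$, with $\widehat{\Ext}^{-1}_R(M^{\vee},N)$ and $\widehat{\Ext}^{0}_R(M^{\vee},N)$ and hence vanish by (ii), then identifies $M\otimes_R N\cong\Hom_R(M^{\vee},N)$; and a short induction on $\depth_R N$ using a regular element shows $\depth_R\Hom_R(X,Y)=\depth_R Y$ whenever $\Ext^{>0}_R(X,Y)=0$ with $X\ne0$. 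Hence $\depth_R(M\otimes_R N)=\depth_R N$, as required.

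For the inductive step, assume $g\ge 1$ and pick a $\mathcal G$-projective precover $G\twoheadrightarrow M$ with kernel $K$, so that $0\to K\to G\to M\to 0$ is proper, $G$ is totally reflexive, and $\gd_R K=g-1$. Hypothesis (i) transfers to $K$ through the long exact sequence of $\gt^R(-,N)$ for this proper sequence (using $\gt^R_{>0}(G,N)=0$), and the tensored sequence $0\to K\otimes_R N\to G\otimes_R N\to M\otimes_R N\to 0$ is exact because its only obstruction, $\gt_1^R(M,N)$, vanishes. Assuming one can likewise secure the Tate vanishing needed to apply the base case to $(G,N)$ and to carry the induction for $(K,N)$ --- Schanuel's lemma gives $\Tt_n^R(K,N)\cong\Tt_{n+1}^R(M,N)\oplus\Tt_n^R(G,N)$, and this is where the bookkeeping is least automatic --- one gets $\depth_R(K\otimes_R N)=\depth_R N-(g-1)$ and $\depth_R(G\otimes_R N)=\depth_R N$; since then $\depth_R(K\otimes_R N)\neq\depth_R(G\otimes_R N)+1$, the depth lemma applied to $0\to K\otimes_R N\to G\otimes_R N\to M\otimes_R N\to 0$ holds as an equality and gives $\depth_R(M\otimes_R N)=\min\{\depth_R N,\ \depth_R N-g\}=\depth_R N-g$, completing the induction.

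The step I expect to resist most is the reverse inequality in the base case: an \emph{upper} bound for $\depth_R(M\otimes_R N)$ is genuinely harder than the lower one, and it seems to need the full strength of $\Tt^R_{\le 0}(M,N)=0$ routed through the duality $T\otimes_R N\cong\Hom_R(T^{\vee},N)$ and the passage from Tate to ordinary cohomology of $M^{\vee}$. Closely related, and also delicate, is checking that the Tate-homology hypothesis survives the d\'evissage in the right shape --- equivalently, that the totally reflexive modules produced by the proper $\mathcal G$-resolution carry enough Tate vanishing against $N$ to feed the base case --- since, as noted, the hypotheses do not visibly force the higher absolute $\Tor$'s to vanish; this is likely where the bulk of the technical work lies.
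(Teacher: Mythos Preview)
Theorem~\ref{CLS} is not proved in this paper; it is quoted from \cite{CLS} as background for the main results. There is thus no proof here to compare against directly, although the paper does establish sharper Cohen--Macaulay versions (Theorems~\ref{t2}, \ref{t4}, Corollary~\ref{c1}), and those arguments show where your sketch runs into trouble.

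Your base case $g=0$ is correct and is essentially Lemma~\ref{l2} with $n$ taken arbitrarily large: the transpose sequence~\eqref{1.2} collapses to $M\otimes_R N\cong\Hom_R(M^*,N)$, and the vanishing $\Ext^{>0}_R(M^*,N)=0$ then gives $\depth_R\Hom_R(M^*,N)=\depth_R N$ by the induction on $\depth_R N$ you describe. Your separate lower bound from the coresolution $0\to M\otimes_R N\to T_{-1}\otimes_R N\to\cdots$ is redundant but harmless.

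The genuine gap is in the inductive step, exactly where you flag it, and your proposed Schanuel identity $\Tt_n^R(K,N)\cong\Tt_{n+1}^R(M,N)\oplus\Tt_n^R(G,N)$ is not available. It would follow from $K\oplus F\cong\Omega M\oplus G$, but Schanuel's lemma in the totally reflexive category requires \emph{both} short exact sequences ending in $M$ to be $\mathcal G$-proper, and the free-resolution sequence $0\to\Omega M\to F\to M\to 0$ is generally not $\mathcal G$-proper: for totally reflexive $T$ the map $\Hom_R(T,F)\to\Hom_R(T,M)$ need not be surjective. With an arbitrary $\mathcal G$-precover you therefore have no mechanism to transfer the hypothesis $\Tt_{\le 0}^R(M,N)=0$ to $G$, and without it the base case cannot be applied to $(G,N)$.

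The fix, used throughout the paper (see the proofs of Theorems~\ref{t1}, \ref{t5}, \ref{t2}), is to choose not an arbitrary precover but the specific approximation $0\to X\to G\to M\to 0$ with $G$ totally reflexive and $\pd_R X<\infty$. Then $\Tt_i^R(X,N)=0$ for all $i$ by Theorem~\ref{tate}(i), the long exact sequence in Tate homology gives $\Tt_i^R(G,N)\cong\Tt_i^R(M,N)$ for every $i$, and your base case applies to $G$. Since $\pd_R X<\infty$, one handles $X$ via Auslander's classical depth formula (Lemma~\ref{l1}) rather than by induction on $g$. This collapses your induction to a single reduction; the remaining bookkeeping (showing $\Tor_{>0}^R(X,N)=0$ from $\gt_{>0}^R(M,N)=0$, and the depth-lemma endgame) is what occupies the bulk of the proof of Theorem~\ref{t2}.
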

In this paper, our aim is to improve Theorem \ref{CLS} over Gorenstein local rings; rings over which each module has finite Gorenstein dimension. Let us remark that the condition (i) in Theorem \ref{CLS} is equivalent to the vanishing of $\gt_i^R(M,N)$ for all $i=1, \ldots, d$, where $d=\dim(R)$. However the condition (ii) cannot be obtained by the vanishing of finitely many Tate homology modules $\Tt_i^R(M,N)$ in general. The main feature of our result is that we require the vanishing of only finitely many Tate homology modules for the depth formula to hold. More precisely, we prove the following result; (see also Corollary \ref{c1} for a more general case).
\begin{thm}\label{MT}
Let $R$ be a Gorenstein local ring of dimension $d$ and let $M$ and $N$ be $R$--modules.
Assume that the following conditions hold:
\begin{enumerate}[(i)]
\item{$\gt_i^R(M,N)=0$ for all $i=1, \ldots, d$.}
\item{$\Tt_i^R(M,N)=0$ for all $i=-d, \ldots, 0$.}
\end{enumerate}
Then $\depth_R(M\otimes_RN)+\depth R=\depth_R(M)+\depth_R(N)$, i.e., the pair $(M,N)$ satisfies the depth formula.
\end{thm}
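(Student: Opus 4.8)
The plan is to reduce the problem to the already-established Theorem \ref{CLS} by replacing the possibly-infinite list of vanishing Tate homologies with a finite one, using the Gorenstein hypothesis on $R$. Since $R$ is Gorenstein, every $R$-module has finite Gorenstein dimension, so condition (i) of Theorem \ref{CLS} is the same as our condition (i) (the equivalence with the range $i=1,\dots,d$ is exactly the remark made just before the statement, which I may invoke). The real work is to show that, over a $d$-dimensional Gorenstein local ring, the vanishing $\Tt_i^R(M,N)=0$ for the finitely many values $i=-d,\dots,0$ \emph{forces} $\Tt_i^R(M,N)=0$ for all $i\le 0$; then hypothesis (ii) of Theorem \ref{CLS} holds and we are done. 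So the theorem is really a statement about rigidity of Tate homology over Gorenstein rings.

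First I would recall that over a Gorenstein local ring Tate homology $\Tt_i^R(M,N)$ agrees with stable (complete) homology and can be computed from a complete resolution of $M$ (equivalently of $N$), and that in negative degrees it is intimately tied to $\Ext$: there is an isomorphism of the shape $\Tt_{-i}^R(M,N)\cong \widehat{\Ext}^{\,i-1}_R(\Tr M^{*},N)$ or, more usefully, $\Tt_{i}^R(M,N)\cong \Tt_{i}^R(\Omega^{-1}M,\Omega^{-1}N)$ — i.e. Tate homology is \emph{dimension-shift invariant in both variables, in both directions}. Using a complete resolution, replacing $M$ by a high cosyzygy $\Omega^{-n}M$ shifts the index; so the vanishing of $\Tt_i^R(M,N)$ on any window of length $>1$ propagates. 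Concretely: the two-sided complete resolution $T$ of $M$ gives $\Tt_i^R(M,N)=\hh_i(T\otimes_R N)$, and truncating/shifting $T$ shows $\Tt_{i}^R(M,N)\cong \Tt_{i+1}^R(\Omega M,N)$ for all $i\in\ZZ$. The key finiteness input is Gorensteinness: the cosyzygy $\Omega^{-d}M$ (or $\Omega^{-d}$ of an appropriate module) is maximal Cohen--Macaulay, hence totally reflexive, and its complete resolution is genuinely two-periodic-free in structure in the sense that acyclicity of the relevant truncated complex is detectable in a bounded range of degrees.

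The core step — and the main obstacle — is an Auslander–Buchsbaum / depth-counting argument showing that once $\Tt_i^R(M,N)$ vanishes on $d+1$ consecutive values $-d,\dots,0$, the associated ``Tate complex'' $T\otimes_R N$ is exact everywhere in nonpositive degrees. I would argue by descending induction: suppose $\Tt_i^R(M,N)=0$ for $-d\le i\le 0$ but $\Tt_{-d-1}^R(M,N)\ne 0$; pass to $\Omega M$ to shift the window, and derive a contradiction by a depth inequality. The mechanism is that a maximal Cohen--Macaulay module over a $d$-dimensional ring has its minimal complete resolution controlled within $d$ steps, so a run of $d+1$ zeros cannot be followed by a nonzero term — this is the standard ``rigidity length is at most $\dim R$ over nice rings'' phenomenon, here in its Tate-homology incarnation, and making it rigorous is where one must be careful about non-Cohen--Macaulay $M$ and $N$ (handled by first replacing them with sufficiently high syzygies, which changes neither hypothesis nor conclusion by the shift isomorphisms, and then invoking Cohen--Macaulayness of the syzygies over the Gorenstein ring). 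Once this propagation lemma is in hand, hypotheses (i) and (ii) give precisely conditions (i) and (ii) of Theorem \ref{CLS}, and the depth formula follows immediately.

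I expect the technical heart to be the bookkeeping that reduces the general $M,N$ to the Cohen--Macaulay (totally reflexive) case while keeping track of how syzygy operations shift both the $\gt$-window $1,\dots,d$ and the $\Tt$-window $-d,\dots,0$ consistently; the depth formula hypotheses should be stable under these reductions by standard syzygy-depth relations over a Cohen--Macaulay (here Gorenstein) ring, but verifying that the \emph{finite} windows transform correctly — rather than the a priori infinite ranges — is the delicate point that distinguishes this argument from the proof of Theorem \ref{CLS}.
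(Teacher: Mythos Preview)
Your plan hinges on a rigidity statement for Tate homology: that over a $d$-dimensional Gorenstein local ring, $\Tt_i^R(M,N)=0$ for $-d\le i\le 0$ forces $\Tt_i^R(M,N)=0$ for all $i\le 0$, after which Theorem~\ref{CLS} applies. This rigidity is not proved in your sketch, and I do not believe it is true in the generality you need. Your justification, that ``a maximal Cohen--Macaulay module over a $d$-dimensional ring has its minimal complete resolution controlled within $d$ steps,'' is not a theorem: complete resolutions over Gorenstein (non--complete-intersection) rings can have arbitrary complexity, and there is no general principle that a run of $d+1$ zeros in $\Tt_\bullet$ must propagate. Translating via $\Tt_i^R(M,N)\cong\widehat{\Ext}^{-i-1}_R(M^*,N)$ for totally reflexive $M$, your claim becomes a rigidity statement for $\Ext$ over Gorenstein rings with gap $d$, which already fails over complete intersections of codimension larger than $d$. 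So the reduction to Theorem~\ref{CLS} cannot be completed along these lines.

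The paper never attempts such a reduction and never concludes that $\Tt_i^R(M,N)$ vanishes for all $i\le 0$. Instead it works directly with the finite window. The engine is Lemma~\ref{l2}: for $M$ totally reflexive, the vanishing $\Tt_i^R(M,N)=0$ for $-n\le i\le 0$ already gives $\depth_R(M\otimes_R N)\ge\min\{n+1,\depth_R N\}$, via the identification with $\Ext^{\bullet}_R(\Tr M,N)$ and the evaluation sequence~(\ref{1.2}). The proof of Theorem~\ref{MT} then splits on $\gd_R(M)$ (Corollary~\ref{c1}): if $\gd_R(M)=0$ one uses the hull $0\to N\to Q\to X\to 0$ with $\id_R(Q)<\infty$ and $X$ maximal Cohen--Macaulay, applies Lemma~\ref{l2} to $M\otimes_R X$, and reads off $\depth_R(M\otimes_R N)=\depth_R N$ directly (Theorem~\ref{t4}); if $\gd_R(M)>0$ one embeds $0\to M\to P\to X\to 0$ with $\pd_R(P)<\infty$ and $X$ totally reflexive, again applies Lemma~\ref{l2}, and computes $\depth_R(M\otimes_R N)$ from $\depth_R(P\otimes_R N)$ via Auslander's formula (Theorem~\ref{t2}). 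In short, the finite vanishing is used only to produce depth bounds, never upgraded to infinite vanishing. Your proposal misses this mechanism entirely; the missing idea is Lemma~\ref{l2} and the Auslander--Buchweitz approximation machinery that lets one apply it.
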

The organization of the paper is as follows. In Section 2, we collect preliminary notions,
definitions and some known results which will be used in this paper. In Section 3, we study the depth of tensor products of modules over Cohen-Macaulay local rings.
As a consequence, in Corollary \ref{c3}, we obtain a bound for the depth of tensor products of modules under some conditions weaker than those in Theorem \ref{MT}.
In Section 4, we prove our main result.
\section{Notations and preliminary results}
Throughout the paper, $R$ denotes a commutative noetherian local ring and all
$R$--modules are tacitly assumed to be finitely generated. 

The notion of the Gorenstein dimension was introduced by Auslander \cite{A1}, and developed by Auslander and Bridger in \cite{AB}.
\begin{dfn}
An $R$--module $M$ is said to be of Gorenstein dimension zero (or totally reflexive) whenever
the canonical map $M\rightarrow M^{**}$ is an isomorphism and $\Ext^i_R(M,R)=0=\Ext^i_R(M^*,R)=0$ for all $i>0$.
\end{dfn}
The Gorenstein dimension of $M$, denoted $\gd_R(M)$, is defined to be the infimum of all
nonnegative integers $n$ such that there exists an exact sequence
$$0\rightarrow G_n\rightarrow\cdots\rightarrow G_0\rightarrow  M \rightarrow 0,$$
where each $G_i$ are totally reflexive.

For a finite presentation $P_1\overset{f}{\rightarrow}P_0\rightarrow
M\rightarrow 0$ of an $R$--module $M$, its transpose, $\Tr M$, is
defined as $\coker f^*$, where $(-)^* := \Hom_R(-,R)$. Therefore we have the
following exact sequence:
\begin{equation}\label{1.1}
0\rightarrow M^*\rightarrow P_0^*\rightarrow P_1^*\rightarrow \Tr
M\rightarrow 0.
\end{equation}
Note that $\Tr M$ is unique up to projective equivalence.

In the following, we summarize some basic facts about Gorenstein dimension; see, for example, \cite{AB} for more details.
\begin{thm}\label{G}
For an $R$--module $M$, the following statements hold:
\begin{enumerate}[(i)]
       \item{$\gd_R(M)=0$ if and only if $\gd_R(\Tr M)=0$;}
       \item{(Auslander-Bridger formula) If $M$ has finite Gorenstein dimension, then $$\gd_R(M)=\depth R-\depth_R(M).$$}
        \item{$R$ is Gorenstein if and only if $\gd_R(M)<\infty$ for all finitely generated $R$--module $M$.}
\end{enumerate}
\end{thm}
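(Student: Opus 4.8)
These three statements are the cornerstones of Auslander--Bridger theory, and the plan is to prove them in the order (i), (ii), (iii), each one feeding into the next. For (i), note first that ``$\gd_R(M)=0$'' just says that $M$ is totally reflexive (immediate from the resolution definition of $\gd$), so the claim is that $M$ is totally reflexive if and only if $\Tr M$ is. I would split the defining sequence \eqref{1.1} of $\Tr M$ into two short exact sequences, dualize, and splice the resulting long exact sequences into the canonical exact sequence
$$0\to\Ext^1_R(\Tr M,R)\to M\to M^{**}\to\Ext^2_R(\Tr M,R)\to 0$$
together with the shift isomorphisms $\Ext^i_R(\Tr M,R)\cong\Ext^{i-2}_R(M^*,R)$ for $i\ge 3$. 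From these one reads off that $M$ is totally reflexive precisely when $\Ext^{>0}_R(M,R)=0=\Ext^{>0}_R(\Tr M,R)$; since $\Tr\Tr M$ is projectively equivalent to $M$ and both the totally reflexive condition and these $\Ext$-vanishing conditions depend only on the projective equivalence class, the criterion is symmetric under $M\leftrightarrow\Tr M$, which is (i).

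For (ii) I would induct on $g=\gd_R(M)$, the main case being $g=0$: that a totally reflexive $M$ has $\depth_R(M)=\depth R$. For ``$\ge$'', I would use that a totally reflexive module has a complete resolution and is thus an $n$-th syzygy module for every $n\ge 0$, so iterating the depth inequality for short exact sequences of syzygies gives $\depth_R(M)\ge\min\{n,\depth R\}=\depth R$. For ``$\le$'' I would reduce to the Artinian situation: a totally reflexive $M$ embeds in a free module, so $\Ass_R(M)\subseteq\Ass_R(R)$, hence a maximal $R$-regular sequence $\bsx$ can be chosen to be simultaneously $M$-regular, and since the totally reflexive property descends along $\bsx$ we are reduced to the case $\depth R=0$; there, writing $M=\ker(\partial\colon G_0\to G_1)$ with $G_0,G_1$ free (taken from a complete resolution, free modules being trivial), the map $\partial$ is not a split injection since $\ker\partial=M\ne 0$, hence $\partial\otimes_R k$ is not injective, and intersecting $\ker\partial$ with $\Soc(G_0)=(\Soc R)^{\rank G_0}\ne 0$ --- on which $\partial$ acts $k$-linearly through $\partial\otimes_R k$ --- yields a nonzero socle element of $M$, so $\depth_R(M)=0$. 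For the inductive step with $g\ge 1$, I would take a minimal free cover $0\to M'\to F\to M\to 0$, so that $\gd_R(M')=g-1$; the induction hypothesis gives $\depth_R(M')=\depth R-(g-1)$, and the depth lemma applied to this sequence then yields $\depth_R(M)=\depth R-g$.

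For (iii): if $R$ is Gorenstein then $\operatorname{injdim}_R R=\dim R<\infty$, so $\Ext^i_R(-,R)=0$ for $i>\dim R$; hence a high enough syzygy $N$ of any module $M$ satisfies $\Ext^{>0}_R(N,R)=0$, and the criterion from (i) --- applied over a Gorenstein ring, where $\Ext^{>\dim R}_R(-,R)=0$ also controls the relevant $\Ext$ of $\Tr N$ and $N^{*}$ --- shows $N$ is totally reflexive, whence $\gd_R(M)\le\dim R<\infty$. Conversely, if every finitely generated module has finite Gorenstein dimension, apply this to $R/\fm$: a finite resolution of $R/\fm$ by totally reflexive modules gives $\Ext^i_R(R/\fm,R)=0$ for $i\gg 0$, hence $\operatorname{injdim}_R R<\infty$ by the standard criterion, and therefore $R$ is Gorenstein by Bass's theorem.

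The step I expect to be the main obstacle is the totally reflexive case of (ii), specifically the bound $\depth_R(M)\le\depth R$: the reduction to the Artinian case rests on the (standard but not purely formal) fact that totally reflexiveness is preserved under reduction modulo a regular element, and the inductive passage is most delicate when the Gorenstein dimension drops to $1$, where one argues as in the projective-dimension case of the Auslander--Buchsbaum formula.
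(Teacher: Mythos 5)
The paper itself offers no proof of Theorem \ref{G}; it is a summary of Auslander--Bridger theory quoted from \cite{AB}, so the only question is whether your reconstruction of the classical arguments is sound. Part (i), the totally reflexive case of (ii) (including the socle argument at $\depth R=0$), and the converse direction of (iii) are correct and follow the standard route. The genuine gap is in the inductive step of (ii), exactly at the point you flag: $g=\gd_R(M)=1$. There $M'$ is totally reflexive, so by your base case $\depth_R(M')=\depth R$, and the depth lemma applied to $0\to M'\to F\to M\to 0$ only yields $\depth_R(M)\ge\depth R-1$; it cannot rule out $\depth_R(M)\ge\depth R$ (for $g\ge2$ the depth lemma does force equality, but not here). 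Your proposed fix, ``argue as in the projective-dimension case of the Auslander--Buchsbaum formula,'' does not transfer: in the $\pd$ case one uses that a minimal presentation map $F_1\to F_0$ lies in $\fm\Hom_R(F_1,F_0)$ and hence induces the zero map on $\Ext^{t}_R(k,-)$, $t=\depth R$, which forces $\Ext^{t-1}_R(k,M)\neq0$. For a totally reflexive cover, minimality only gives $M'\subseteq\fm F$, and a homomorphism with image in $\fm F$ need not lie in $\fm\Hom_R(M',F)$ (already the inclusion $\fm\hookrightarrow R$ over $k[[x,y]]$ fails this), so the induced map $\Ext^{t}_R(k,M')\to\Ext^{t}_R(k,F)$ need not vanish. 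This missing case is precisely the hard content of the Auslander--Bridger formula; equivalently, the lemma that over a ring with $\depth R=0$ every module of finite Gorenstein dimension is totally reflexive. In \cite{AB} and in later treatments it is handled by a separate argument (e.g. via the identity $\gd_R(M)=\sup\{i\mid\Ext^i_R(M,R)\neq0\}$ together with a socle embedding $k\hookrightarrow R$), not by mimicking the free-cover computation; without it your induction does not close.

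Two smaller points. The step ``$\gd_R(M')=g-1$ for a minimal free cover'' is itself a nontrivial structural fact (G-dimension can be read off any resolution), fair to quote from \cite{AB} but not formal. And in the forward direction of (iii), the claim that $\Ext^{>\dim R}_R(-,R)=0$ ``also controls the relevant Ext of $\Tr N$ and $N^{*}$'' is inaccurate: for a $d$-th syzygy $N$ the conditions $\Ext^i_R(\Tr N,R)=0$ for $1\le i\le d$ amount to $N$ being reflexive/$d$-torsionfree, which is not a consequence of finite injective dimension alone; one needs the standard induction (killing a regular element) showing that maximal Cohen--Macaulay modules over a Gorenstein ring are totally reflexive, or Auslander--Bridger's theorem that syzygies over Gorenstein rings are torsionfree.
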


Tate (co)homology for modules of finite Gorenstein dimension was studied by Avramov and Martsinkovsky in \cite{AvM}.
A complex $\mathbf{T}$ of free $R$--modules is called \emph{totally acyclic} if  $\hh_n(\mathbf{T})=0=\hh_n(\Hom_R(\mathbf{T},R))$ for all $n\in\ZZ$.
A \emph{complete resolution} of an $R$-module $M$ is a diagram
$$ \mathbf{T} \overset{\vartheta}{\longrightarrow} \mathbf{P} \overset{\pi}{\longrightarrow}M,$$
where $\pi$ is a projective resolution, $T$ is a totally
acyclic complex, $\vartheta$ is a morphism, and $\vartheta_i$ is an isomorphism
for $i \gg 0$. An $R$-module has finite Gorenstein dimension if and only
if it has a complete resolution.

Let $M$ be an $R$-module with a complete resolution $\mathbf{T}\rightarrow \mathbf{P}\rightarrow M$. For an $R$-module $N$, \emph{Tate homology} of $M$ and $N$ is defined as
$$\widehat{\Tor}_i^R(M,N) = \hh_i(\mathbf{T}\otimes_RN)  \text{ for } i\in\ZZ.$$
Also, for an $R$-module $N$, \emph{Tate cohomology} of $M$ and $N$ is defined as
$$ \widehat{\Ext}^i_R(M,N) = \hh^i(\Hom_R(\mathbf{T},N))  \text{ for } i\in\ZZ.$$
By construction, there are isomorphisms
\begin{equation}\label{t}
\widehat{\Tor}_i^R(M,N)\cong\Tor_i^R(M,N) \text{ and } \widehat{\Ext}^i_R(M,N)\cong\Ext^i_R(M,N),
\end{equation}
for all $i>\gd_R(M)$.
In the following we summarize some basic properties about Tate homology which will be used throughout the paper; see \cite{AvM} and \cite{CJ1} for more details.
\begin{thm}\label{tate}
Let $M$ and $N$ be $R$--modules such that $M$ has finite Gorenstein dimension. Then the following statements hold:
\begin{enumerate}[(i)]
\item{If either $\pd_R(M)<\infty$ or $\pd_R(N)<\infty$, then $\Tt_i^R(M,N)=0$ for all $i\in\ZZ$.}
\item{If $\gd_R(N)<\infty$, then $\Tt_i^R(M,N)\cong\Tt_i^R(N,M)$ for all $i\in\ZZ$.}
\item{If $\gd_R(M)=0$, then $\widehat{\Tor}_i^R(M,N)\cong\widehat{\Ext}^{-i-1}_R(M^*,N)$ for all $i\in\mathbb{Z}$.}
\item{If $N$ has finite injective dimension, then $\Tt_i^R(M,N)=0$ for all $i\in\ZZ$.}
\end{enumerate}
\end{thm}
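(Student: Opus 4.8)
These are the standard functorial properties of Tate homology, and my plan is to reduce all four to a few elementary facts, after which (i), (iii) and (iv) become essentially formal and the real content sits in (ii). The facts I would set up first are these. \emph{(A)} Since $M$ is finitely generated over a noetherian ring and has finite Gorenstein dimension, one may take its complete resolution $\mathbf{T}\lra\mathbf{P}\lra M$ with $\mathbf{T}$ a complex of \emph{finitely generated} free modules; then $\Hom_R(\mathbf{T},R)$ is again totally acyclic, and there is a natural isomorphism of complexes $\mathbf{T}\otimes_RN\cong\Hom_R(\Hom_R(\mathbf{T},R),N)$. \emph{(B)} The short exact sequence $0\lra\Omega M\lra P_0\lra M\lra0$ with $P_0$ free, combined with the long exact sequences of Tate homology (their existence in the first variable uses that complete resolutions can be chosen compatibly along short exact sequences, see \cite{AvM}) and with the vanishing $\widehat{\Tor}_j^R(\text{free},-)=0$ (a special case of (i), proved below), gives the syzygy shift $\widehat{\Tor}_i^R(M,N)\cong\widehat{\Tor}_{i-1}^R(\Omega M,N)$ for every $i$; the corresponding shift in the second variable is immediate from $\mathbf{T}$ being termwise flat. \emph{(C)} For an acyclic complex $\mathbf{C}$ of projective modules and a module $N$ of finite injective dimension, $\Hom_R(\mathbf{C},N)$ is acyclic — reduce to $N$ injective, where $\Hom_R(-,N)$ is exact, by induction on $\id_RN$ using $0\lra N\lra E\lra N'\lra0$.

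Granting (A)--(C): \emph{(i)} If $\pd_RM<\infty$, a finite projective resolution together with the zero morphism is a complete resolution, so $\widehat{\Tor}_i^R(M,N)=\hh_i(0\otimes_RN)=0$. If $\pd_RN<\infty$, I induct on $\pd_RN$ starting from the free case (which is $\hh_i$ of $\mathbf{T}$ tensored with a flat module), the inductive step using that $\mathbf{T}$ termwise flat turns $0\to K\to F\to N\to0$, with $\pd_RK=\pd_RN-1$, into a short exact sequence of complexes after applying $\mathbf{T}\otimes_R-$, and then the associated homology long exact sequence. \emph{(iii)} When $\gd_RM=0$ both $M$ and $M^{*}$ are totally reflexive; choosing finitely generated free resolutions $\mathbf{P}\to M$, $\mathbf{Q}\to M^{*}$ and using $\Ext^{>0}_R(M,R)=0=\Ext^{>0}_R(M^{*},R)$, I dualize $\mathbf{Q}$ (obtaining an acyclic coresolution of $M\cong M^{**}$) and splice it to $\mathbf{P}$ along $M$ to get a complete resolution $\mathbf{T}$ of $M$ whose $R$-dual represents, up to a degree shift by one, the canonical complete resolution of $M^{*}$. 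Feeding this into (A) and tracking degrees — the shift is exactly why the exponent is $-i-1$ and not $-i$ — yields $\widehat{\Tor}_i^R(M,N)\cong\widehat{\Ext}^{-i-1}_R(M^{*},N)$. \emph{(iv)} By (A), $\widehat{\Tor}_i^R(M,N)\cong\hh_i\bigl(\Hom_R(\Hom_R(\mathbf{T},R),N)\bigr)$, and $\Hom_R(\mathbf{T},R)$ is an acyclic complex of finitely generated projectives, so (C) makes $\Hom_R(\Hom_R(\mathbf{T},R),N)$ acyclic whenever $\id_RN<\infty$; hence $\widehat{\Tor}_i^R(M,N)=0$ for all $i$.

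The one substantive item is \emph{(ii)}. Iterating the syzygy shift of (B) $\gd_RM$ times in the first variable and $\gd_RN$ times in the second reduces to the case where $M$ and $N$ are both totally reflexive, and there one must compare the complete resolutions $\mathbf{T}_M$ and $\mathbf{T}_N$. The naive attempt — use the total complex of $\mathbf{T}_M\otimes_R\mathbf{T}_N$ and show it computes both Tate homologies — runs into trouble because $\mathbf{T}_M$ and $\mathbf{T}_N$ are unbounded in both directions, so that total complex does not converge well and $\mathbf{T}_M$ is not homotopically flat (it is a non-contractible acyclic complex of free modules). The remedy I would adopt from \cite{CJ1} is the pinched (complete) tensor product, which is designed to compute $\widehat{\Tor}_i^R(M,N)$ and, being symmetric in its two arguments, equally $\widehat{\Tor}_i^R(N,M)$; checking that the pinched homology genuinely agrees with $\widehat{\Tor}_i^R(M,N)$ for totally reflexive $M$ and $N$ is the crux, and the only step that goes beyond bookkeeping.
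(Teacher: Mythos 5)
The paper offers no proof of this theorem at all: it is presented as a summary of known facts with pointers to \cite{AvM} and \cite{CJ1}, so there is no internal argument to compare yours against, and the right comparison is with those sources. Your reconstruction is correct where it is self-contained. Taking a complete resolution $\mathbf{T}$ of $M$ by finitely generated free modules and using the natural isomorphism $\mathbf{T}\otimes_RN\cong\Hom_R(\Hom_R(\mathbf{T},R),N)$ is exactly the right mechanism: it gives (i) and (iv) essentially formally (your induction on $\id_R N$ in (C) is the standard one, and taking $\mathbf{T}=0$ when $\pd_RM<\infty$ is legitimate), and your degree bookkeeping in (iii) is right — splicing a free resolution of $M$ to the dual of a free resolution of $M^*$ produces a complete resolution whose $R$-dual is the complete resolution of $M^*$ shifted by one, which is precisely where the exponent $-i-1$ comes from. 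Your reduction of (ii) to the totally reflexive case via syzygy shifts in both variables is also valid (and the first-variable shift can even be had without long exact sequences, by truncating and shifting the complete resolution itself). What you do not do is prove (ii): you correctly diagnose that the naive totalization of $\mathbf{T}_M\otimes_R\mathbf{T}_N$ fails and then import the pinched tensor product of \cite{CJ1}, leaving the comparison of pinched homology with $\Tt$ as the acknowledged crux. That is an accurate assessment — balancedness of Tate homology is genuinely a theorem of Christensen and Jorgensen rather than a formal consequence of the definitions — and since the paper itself relies on the same citation for exactly this point, I would not count it as a gap, only note that in both your write-up and the paper part (ii) is ultimately cited rather than proved.
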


A sequence $\eta$ of $R$-modules is called \emph{$\G$-proper} if the
induced sequence $\Hom_R(M,\eta)$ is exact for every totally
 reflexive $R$-module $M$.

A \emph{$\G$-proper resolution} of an $R$-module $M$ is a resolution
$\mathbf{L}\rightarrow M\rightarrow0$ by totally reflexive $R$-modules such that the augmented
resolution $\mathbf{L}^+$ is a $\G$-proper sequence. Every module of finite Gorenstein
dimension has a $\G$-proper resolution.

Let $M$ be an $R$-module with a $\G$-proper resolution $\mathbf{L}\rightarrow M\rightarrow0$. The
  \emph{$\G$-relative} homology of $M$ and $N$ is defined as
    $$\gt_{i}^R(M,N):= \hh_i(\mathbf{L}\otimes_RN) \ \text{for } i\in\ZZ.$$
It follows from the definition that $\gt_i^R(M,N)=0$ for all $i>\gd_R(M)$.

Next we summarize some basic properties about relative homology which will be used throughout the paper; see \cite{AvM} and \cite{Ia} for more details.
\begin{thm}\label{rel}
Let $M$ and $N$ be $R$--modules such that $M$ has finite Gorenstein dimension. The following statements hold:
\begin{enumerate}[(i)]
\item{If either $\pd_R(M)$ or $\pd_R(N)$ is finite, then
$$\gt_i^R(M,N)=\Tor_i^R(M,N) \text{ for all } i\geq0.$$}
\item{There is an exact sequence:
$$ \cdots\rightarrow \gt_{2}^R(M,N)\rightarrow\Tt_1^R(M,N)\rightarrow
  \Tor_{1}^R(M,N)\rightarrow\gt_{1}^R(M,N)\rightarrow0.$$}
\item{If $\gd_R(N)<\infty$, then $\gt_i^R(M,N)\cong\gt_i^R(N,M)$ for all $i\geq0$.}
\item{If $N$ has finite injective dimension, then $$\gt_i^R(M,N)=\Tor_i^R(M,N) \text{ for all } i\geq0.$$ }
\end{enumerate}
\end{thm}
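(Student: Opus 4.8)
All four statements rest on a single construction -- the homology form of the Avramov--Martsinkovsky exact sequence comparing $\gt^R$, $\Tor^R$, and $\Tt^R$ -- so the plan is to establish (ii) first, deduce (i) and (iv) from it, and handle (iii) by a separate spectral-sequence argument.

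\emph{Part (ii).} Fix a $\G$-proper resolution $\mathbf L\to M$ by totally reflexive modules and a complete resolution $\mathbf T\xrightarrow{\vartheta}\mathbf P\xrightarrow{\pi}M$ with $\vartheta_i$ an isomorphism for $i\gg0$. Lifting $\mathrm{id}_M$ through the resolution $\mathbf L\to M$ yields a chain map $\mathbf P\to\mathbf L$ whose mapping cone $\mathbf C$ is acyclic and has totally reflexive terms (direct sums of projectives and of the $L_i$); since $\vartheta$ is eventually an isomorphism, $\mathbf C$ may be identified in nonnegative degrees with a shift of the totally acyclic complex $\mathbf T$. Applying $-\otimes_R N$ to the triangle $\mathbf P\to\mathbf L\to\mathbf C$ and passing to homology produces the asserted sequence, in which the map $\Tt_i^R(M,N)\to\Tor_i^R(M,N)$ is the natural one induced by $\vartheta$ and, for $i>\gd_R(M)$, is the isomorphism \eqref{t}; see \cite{AvM} and \cite{Ia}. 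I expect the only nontrivial point here to be the bookkeeping -- verifying that $\mathbf C$ really is totally acyclic in the relevant range and that the connecting homomorphisms are the stated natural ones.

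\emph{Parts (i) and (iv).} Under any of the three hypotheses one has $\Tt_i^R(M,N)=0$ for every $i\in\ZZ$: by Theorem \ref{tate}(i) if $\pd_R(M)$ or $\pd_R(N)$ is finite, and by Theorem \ref{tate}(iv) if $N$ has finite injective dimension. Feeding this into the sequence of (ii), together with its continuation
\[\cdots\to\Tor_{i+1}^R(M,N)\to\gt_{i+1}^R(M,N)\to\Tt_i^R(M,N)\to\Tor_i^R(M,N)\to\cdots,\]
collapses everything into isomorphisms $\Tor_i^R(M,N)\cong\gt_i^R(M,N)$ for $i\geq1$; for $i=0$ both sides equal $M\otimes_R N$ since $-\otimes_R N$ is right exact and $L_1\to L_0\to M\to0$ is exact. (When $\pd_R(M)<\infty$ one can argue directly instead: a projective resolution of $M$ of length $\gd_R(M)$ is already $\G$-proper, because $\Ext_R^{\geq1}(G,R)=0$ for totally reflexive $G$, and a dimension shift down the finite resolution forces $\Ext_R^{\geq1}(G,-)$ to vanish on every syzygy, so $\Hom_R(G,-)$ preserves exactness of the augmented resolution; then $\gt_i^R(M,N)=\Tor_i^R(M,N)$ immediately.)

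\emph{Part (iii).} Suppose $\gd_R(N)<\infty$, and choose $\G$-proper resolutions $\mathbf L\to M$ and $\mathbf L'\to N$ by totally reflexive modules. I would compute the homology of the total complex of the first-quadrant bicomplex $\mathbf L\otimes_R\mathbf L'$ in two ways. Filtering by columns, $E^1_{p,q}=H_q(L_p\otimes_R\mathbf L')=\gt_q^R(N,L_p)$, and the crucial point is that this vanishes for $q>0$ and equals $L_p\otimes_R N$ for $q=0$ because $L_p$ is totally reflexive; the spectral sequence then collapses and yields $H_n(\mathrm{Tot})=H_n(\mathbf L\otimes_R N)=\gt_n^R(M,N)$. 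Filtering by rows makes $E^1_{p,q}=H_p(\mathbf L\otimes_R L'_q)=\gt_p^R(M,L'_q)$ vanish for $p>0$ by the same reason, so the other spectral sequence collapses to $H_n(\mathrm{Tot})=\gt_n^R(N,M)$; comparing the two identifications gives the isomorphism. The main obstacle is precisely the vanishing just invoked -- that a $\G$-proper resolution stays exact after tensoring with a totally reflexive module, equivalently $\gt_q^R(A,G)=0$ for all $q>0$ whenever $G$ is totally reflexive. One deduces this from (ii) by showing that the comparison map $\Tt_i^R(A,G)\to\Tor_i^R(A,G)$ is an isomorphism for all $i\geq1$: for $i>\gd_R(A)$ this is \eqref{t}, and in the remaining range it follows via the symmetry of $\Tt^R$ (Theorem \ref{tate}(ii), available since $\gd_R(G)=0$) and of $\Tor^R$, provided one checks that this composite of natural isomorphisms is the map occurring in (ii). Matching up these natural maps is the delicate part; the details are in \cite{AvM} and \cite{Ia}.
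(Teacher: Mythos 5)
The paper does not actually prove Theorem \ref{rel}: it is stated as background, with the proofs delegated to \cite{AvM} and \cite{Ia} (``see \cite{AvM} and \cite{Ia} for more details''), so there is no internal argument for your attempt to be measured against. Judged on its own terms, your reductions of (i) and (iv) to (ii) are fine: vanishing of $\Tt_i^R(M,N)$ for all $i$ (Theorem \ref{tate}(i) resp.\ (iv)) collapses the long exact sequence into isomorphisms $\Tor_i^R(M,N)\cong\gt_i^R(M,N)$ for $i\geq1$, and the degree-zero identification and the direct argument when $\pd_R(M)<\infty$ (a finite projective resolution is $\G$-proper because $\Ext^{\geq1}_R(G,-)$ vanishes on modules of finite projective dimension) are correct.

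The two places where the real content lies are, however, not closed. In (ii), the cone $\mathbf C$ of a comparison map $\mathbf P\to\mathbf L$ has terms $P_{i-1}\oplus L_i$; it is an acyclic, bounded-below complex of totally reflexive modules, whereas $\mathbf T$ is an unbounded complex of \emph{free} modules, so $\mathbf C$ cannot be ``identified in nonnegative degrees with a shift of $\mathbf T$,'' and nothing in your sketch explains why $\hh_i(\mathbf C\otimes_RN)$ should compute Tate homology (acyclic complexes of totally reflexive modules need not behave well under $-\otimes_RN$; relating such complexes to complete resolutions is precisely the delicate point addressed in \cite{AvM} and \cite{CJ1}). So the construction of the Avramov--Martsinkovsky homology sequence is asserted rather than proved. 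In (iii), the whole argument hinges on $\gt_q^R(N,L_p)=0$ for $q>0$ with $L_p$ totally reflexive, and you derive this from an \emph{abstract} isomorphism $\Tt_i^R(A,G)\cong\Tor_i^R(A,G)$ obtained by shuffling the symmetry of $\Tt$ and of $\Tor$; but to collapse the exact sequence of (ii) you need the particular comparison map occurring in that sequence to be bijective, which an abstract isomorphism of the two modules does not give — you flag this yourself and leave it to the references. A self-contained repair is available inside the paper: Lemma \ref{l3} together with the Auslander--Bridger formula gives $\gt_q^R(N,G)=0$ for all $q>0$ whenever $G$ is totally reflexive (at every prime in its support $\depth_{R_\fp}G_\fp=\depth R_\fp$, so the bound in Lemma \ref{l3} is $\leq 0$), and with that input your double-complex/spectral-sequence argument for (iii) goes through. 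As written, though, both (ii) and the key vanishing in (iii) are genuine gaps rather than bookkeeping, and the honest course is what the paper itself does: cite \cite{AvM}, \cite{Ia} (and \cite{CJ1}) for them.
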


The following results will be used throughout the paper.
\begin{lem}\label{l1}
Let $M$ and $N$ be $R$-modules such that $M$ has finite Gorenstein dimension. Assume that either $\pd_R(M)$ or $\id_R(N)$ is finite.  If $\Tor_i^R(M,N)=0$ for all $i>0$, then
$\depth R+\depth_R(M\otimes_RN)=\depth_R(M)+\depth_R(N).$
\end{lem}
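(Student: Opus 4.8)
The plan is to reduce to the classical Auslander depth formula (Theorem \ref{CLS} or the older result in \cite{A2}) by splitting into two cases according to which hypothesis holds. If $\pd_R(M)<\infty$, then since $\Tor_i^R(M,N)=0$ for all $i>0$ we are exactly in the situation of Auslander's original depth formula \cite{A2}, so there is nothing more to do. The substantive case is $\id_R(N)<\infty$. Here I would first observe that, by Theorem \ref{tate}(iv) and Theorem \ref{rel}(iv), the finiteness of $\id_R(N)$ forces $\Tt_i^R(M,N)=0$ for all $i\in\ZZ$ and $\gt_i^R(M,N)=\Tor_i^R(M,N)$ for all $i\geq 0$; combined with the vanishing hypothesis $\Tor_i^R(M,N)=0$ for $i>0$, this gives $\gt_i^R(M,N)=0$ for all $i>0$ and $\Tt_i^R(M,N)=0$ for all $i\leq 0$. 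Thus the hypotheses of Theorem \ref{CLS} are met (recall $M$ has finite Gorenstein dimension by assumption), and the depth formula follows immediately.

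Alternatively, in the case $\id_R(N)<\infty$ one can argue directly without invoking the relative/Tate machinery: induct on $\id_R(N)$. When $\id_R(N)=0$, so $N$ is injective hence (over a local ring) a finite direct sum of copies of $E(R/\fm)$, one computes $\depth_R(M\otimes_R N)$ and $\depth_R N$ explicitly and checks the formula against the Auslander--Bridger formula $\gd_R M=\depth R-\depth_R M$ (Theorem \ref{G}(ii)); Matlis duality identifies $M\otimes_R E$ with a dual of $\Hom_R(M,E)$ and lets one read off the depth. For the inductive step, take a short exact sequence $0\to N'\to I^0\to N\to 0$ with $I^0$ injective and $\id_R N'<\infty$; tensoring with $M$ and using $\Tor_i^R(M,N)=0$ for $i>0$ together with the induction hypothesis and the standard depth lemma for short exact sequences yields the claim.

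I expect the main obstacle — and really the only point requiring care — to be the base case $\id_R(N)<\infty$ if one takes the self-contained route: pinning down $\depth_R(M\otimes_R N)$ when $N$ has finite injective dimension, since tensoring does not interact with injective resolutions as cleanly as with projective ones. Routing the argument instead through Theorem \ref{CLS} sidesteps this entirely, at the cost of relying on the relative- and Tate-homology vanishing statements in Theorems \ref{tate} and \ref{rel}; since those are already recorded in the excerpt, this is the cleaner path and the one I would present. The only loose end is to make sure the case division is exhaustive and that the hypothesis "$M$ has finite Gorenstein dimension" is genuinely used (it is needed both for Tate homology to be defined and for Theorem \ref{CLS} to apply).
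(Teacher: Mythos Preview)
Your primary argument is correct but takes a different route from the paper. For the case $\pd_R(M)<\infty$ you both invoke Auslander \cite{A2}. For the case $\id_R(N)<\infty$, the paper's proof is simply a citation to Yassemi \cite[Theorem~2.13]{Y}, whereas you deduce the result from Theorem~\ref{CLS} via Theorems~\ref{tate}(iv) and~\ref{rel}(iv). Your derivation is valid and entirely self-contained relative to the preliminaries already recorded in the paper; the trade-off is that you are invoking a considerably deeper theorem (Theorem~\ref{CLS}) to recover a special case that historically precedes it, so the argument is somewhat backwards in the broader literature even though there is no internal circularity here.

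Your sketched \emph{alternative} argument, however, does not work as written. In the base case $\id_R(N)=0$ you claim $N$ is a finite direct sum of copies of $E(R/\fm)$, but recall that all modules in this paper are finitely generated; over a non-Artinian local ring $E(R/\fm)$ is not finitely generated, so the only finitely generated injective module is $0$, and the induction never starts. The short exact sequence $0\to N'\to I^0\to N\to 0$ in your inductive step has the same defect (a finitely generated injective $I^0$) and is moreover oriented the wrong way to lower $\id_R(N)$. A genuine direct proof along these lines would have to leave the finitely generated category inside the resolution and control depth via Bass numbers or local duality, which is essentially what \cite{Y} does; so your instinct to abandon this route in favor of Theorem~\ref{CLS} is the right call.
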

\begin{proof}
See \cite[Theorem 1.2]{A2} and \cite[Theorem 2.13]{Y}.
\end{proof}
\begin{lem}\cite[Corollary 3.18]{CLS}\label{l3}
Let $M$ and $N$ be $R$--modules such that $M$ has finite Gorenstein dimension. Then
$$\sup\{i\mid\gt_i^R(M,N)\neq0\}\leq\sup\{\depth R_\fp-\depth_{R_\fp}(M_\fp)-\depth_{R_\fp}(N_\fp)\mid \fp\in\Spec R\}.$$
\end{lem}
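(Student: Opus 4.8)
The plan is to reduce, by localizing at a minimal prime of the support of $\gt_i^R(M,N)$, to a finite‑length statement over a local ring, then to strip off the Gorenstein dimension of $M$ one step at a time until only $\gt_1$ remains, and finally to estimate depths along the short exact sequences that compute $\gt_1$. The case $i=0$, where $\gt_0^R(M,N)=M\otimes_RN$, is elementary, so fix $i\ge 1$ with $\gt_i^R(M,N)\ne 0$.

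Localizing a $\G$-proper resolution of $M$ at $\fp$ yields a $\G$-proper resolution of $M_\fp$ over $R_\fp$ (localization is exact and preserves total reflexivity), so $\gt_i^R(M,N)_\fp\cong\gt_i^{R_\fp}(M_\fp,N_\fp)$. Choose $\fp$ minimal in $\Supp_R\gt_i^R(M,N)$; then $\gt_i^{R_\fp}(M_\fp,N_\fp)$ is a nonzero module of finite length over $R_\fp$, and $\gd_{R_\fp}M_\fp\ge i$ (otherwise this module vanishes). Thus it suffices to prove the following local statement: \emph{if $(S,\fn)$ is local, $X$ has finite Gorenstein dimension, $i\ge 1$, and $\gt_i^S(X,Y)$ is a nonzero module of finite length, then $i+\depth_SY\le\gd_SX$.} Indeed, applying this to $(S,X,Y)=(R_\fp,M_\fp,N_\fp)$ and using the Auslander--Bridger formula (Theorem~\ref{G}(ii)) gives $i\le\gd_{R_\fp}M_\fp-\depth_{R_\fp}N_\fp=\depth R_\fp-\depth_{R_\fp}M_\fp-\depth_{R_\fp}N_\fp$, which is at most the right‑hand side of the asserted inequality.

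To prove the local statement I reduce further to $i=1$. Note first that $\gt_i^S(X,Y)\ne 0$ forces $\gd_SX\ge i$. Fix a $\G$-proper resolution $\cdots\to L_1\to L_0\to X\to 0$ with $\G$-syzygies $K_0,K_1,\dots$; each truncation is a $\G$-proper resolution of the corresponding $K_j$, each $L_j$ is totally reflexive so $\gt_{\ge 1}^S(L_j,Y)=0$, and $\gd_SK_j=\gd_SX-(j+1)$ when this is positive. The long exact sequence of $\gt^S(-,Y)$ attached to the $\G$-proper short exact sequence $0\to K_j\to L_j\to K_{j-1}\to 0$ gives $\gt_m^S(K_{j-1},Y)\cong\gt_{m-1}^S(K_j,Y)$ for $m\ge 2$ and $\gt_1^S(K_{j-1},Y)=\ker\!\big(K_j\otimes_SY\to L_j\otimes_SY\big)$. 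Iterating $i-1$ times turns $\gt_i^S(X,Y)$ into $\gt_1^S(W,Y)$ with $W$ a $\G$-syzygy of $X$ and $\gd_SW=\gd_SX-i+1\ge 1$; finite length and nonvanishing are preserved. So we must show: \emph{if $\gd_SW=h\ge 1$ and $\gt_1^S(W,Y)$ is a nonzero module of finite length, then $\depth_SY\le h-1$.} Suppose instead $\depth_SY\ge h$. Take the beginning $0\to K\to L\to W\to 0$ of a $\G$-proper resolution of $W$, with $L$ totally reflexive and $\gd_SK=h-1$; then $\gt_1^S(W,Y)=\ker(K\otimes_SY\to L\otimes_SY)$ sits in short exact sequences $0\to\gt_1^S(W,Y)\to K\otimes_SY\to I\to 0$ and $0\to I\to L\otimes_SY\to W\otimes_SY\to 0$, whence $\depth_S\gt_1^S(W,Y)\ge\min\{\depth_S(K\otimes_SY),1\}$. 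Thus it is enough to prove $\depth_S(K\otimes_SY)\ge 1$, which contradicts the finite length and nonvanishing of $\gt_1^S(W,Y)$.

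The step I expect to be the main obstacle is exactly this estimate: \emph{if $\gd_SK<\infty$ and $\depth_SY\ge\gd_SK+1$, then $\depth_S(K\otimes_SY)\ge 1$.} One cannot simply bound $\depth_S(G\otimes_SY)$ below by $\depth_SY$ for totally reflexive $G$ --- this already fails for the node $k[[x,y]]/(xy)$ --- so the vanishing of the higher modules $\Tor_{\ge 1}^S(-,Y)$, which is supported on $\Sing S$, must be used. I would try one of two routes. First: induct on $\gd_SK$, tensoring the $\G$-syzygy sequences of $K$ with $Y$ to push the estimate down to the case of a totally reflexive module, then compare $\gt$ with $\Tor$ off $\Sing S$ (they agree there, since totally reflexive modules become free on the regular locus) and invoke the depth formula of Lemma~\ref{l1} prime by prime on the regular locus. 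Second: Matlis dualize; $(K\otimes_SY)^{\vee}\cong\Hom_S(K,Y^{\vee})$ has finite length exactly when $K\otimes_SY$ does, and one reads off the absence of a socle from the standard bound $\depth_S\Hom_S(A,B)\ge\min\{2,\depth_SB\}$ together with the Avramov--Martsinkovsky comparison (Theorem~\ref{rel}(ii)) of the relevant relative $\Ext$ with ordinary $\Ext$ against $Y^{\vee}$. Granting this estimate, the contradiction is complete and the lemma follows.
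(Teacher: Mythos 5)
Your overall skeleton (localize at a minimal prime of $\Supp\gt_i$, strip $\G$-syzygies down to $\gt_1$, then estimate depths along the two short exact sequences) is the natural Auslander-style reduction, and those steps are essentially fine; the paper itself only cites \cite[Corollary 3.18]{CLS}, so the whole burden falls on the step you explicitly leave unproved, and that is where the argument breaks. The estimate you ``grant'' --- \emph{if $\gd_SK<\infty$ and $\depth_SY\ge\gd_SK+1$ then $\depth_S(K\otimes_SY)\ge 1$} --- is false as stated: over the node $S=k[[x,y]]/(xy)$ take $K=S/(x)$ (totally reflexive, so $\gd_SK=0$) and $Y=S/(y)$ (depth $1$); then $K\otimes_SY\cong k$ has depth $0$. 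Nor can you rescue it with the vanishing you actually have access to in your setup: even if you choose $i$ to be the \emph{top} nonvanishing index (which you did not, but could), the extra information you inherit is only $\gt_{\ge1}^S(K,Y)=0$, and in the example above this holds automatically (any $\gt_{\ge 1}$ with a totally reflexive module in the first slot vanishes) while the conclusion still fails. In other words, the missing step is precisely a depth-formula-type statement for $\gt$-independent pairs, and the failure of such statements without additional Tate homology hypotheses is the central phenomenon of this paper (compare Example \ref{e1} and the role of condition (ii) in Theorems \ref{MT}, \ref{t2}, \ref{t4}); in the lemma you are proving there is no Tate homology hypothesis to draw on, so neither of your two sketched routes (``depth formula prime by prime on the regular locus'' --- depth is not detected on $\Spec R\setminus\Sing R$ --- or Matlis duality, where $\depth_S Y^{\vee}=0$ makes the $\min\{2,\depth\}$ bound vacuous) closes the gap. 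This is exactly the content carried by the citation to \cite{CLS}, so the proposal does not constitute a proof.

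Two smaller points. First, the case $i=0$ is not ``elementary'': with the usual convention $\depth 0=\infty$, the displayed inequality can actually fail when the right-hand side is negative (again $M=R/(x)$, $N=R/(y)$ over the node: the left side is $0$, the right side is $-1$), so the lemma must be read as a statement about positive degrees (which is all the paper ever uses), and any honest treatment of $i=0$ would at best require an intersection-theorem-type input, not a remark. Second, your localization step needs more care than ``localization is exact and preserves total reflexivity'': $\G$-properness over $R_\fp$ must be tested against all totally reflexive $R_\fp$-modules, which need not come from $R$; the standard fix is to use a proper resolution whose syzygies have finite projective dimension (e.g.\ a $\G$-approximation $0\to K\to L_0\to M\to 0$ with $\pd_RK<\infty$ continued by a free resolution of $K$), since short exact sequences with kernel of finite projective dimension remain proper after localization. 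These two points are repairable; the depth estimate above is not, at least not by the means you indicate.
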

\section{Depth formula and vanishing of Tate homology}
The following lemma plays a crucial role in this paper. Before giving a proof we recall that, for $R$--modules $M$ and $N$, the following sequence is exact:
\begin{equation}\label{1.2}
0\rightarrow \Ext^1_R(\Tr M,N)\rightarrow
M\otimes_RN\overset{\eta}{\rightarrow}\Hom_R(M^*,N)
\rightarrow\Ext^2_R(\Tr M,N)\rightarrow0,
\end{equation}
where $\eta$ is the evaluation map \cite[Proposition 2.6]{AB}.
\begin{lem}\label{l2}
Let $M$ be a totally reflexive $R$--module and let $N$ be an $R$--module. Assume $n$ is a nonnegative integer such that
$\Tt_i^R(M,N)=0$ for $-n\leq i\leq0$. Then $\depth_R(M\otimes_RN)\geq\min\{n+1,\depth_R(N)\}$.
\end{lem}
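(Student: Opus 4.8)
The plan is to reduce everything to a statement about ordinary depth via the four-term exact sequence \eqref{1.2} together with a careful bookkeeping of how Tate homology controls the ordinary Tor's and, through them, the two outer $\Ext$-terms. Since $M$ is totally reflexive, $\gd_R(M)=0$, so by \eqref{t} we have $\widehat{\Tor}_i^R(M,N)\cong\Tor_i^R(M,N)$ for all $i>0$; the hypothesis on the vanishing of $\widehat{\Tor}_i^R(M,N)$ for $-n\le i\le 0$ therefore carries nontrivial information only in the nonpositive range, and by Theorem \ref{tate}(iii) it translates into $\widehat{\Ext}^{-i-1}_R(M^*,N)=0$ for $-n\le i\le 0$, i.e. $\widehat{\Ext}^{j}_R(M^*,N)=0$ for $-1\le j\le n-1$. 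Since $M^*$ is again totally reflexive, these Tate-cohomology vanishings agree with ordinary $\Ext^{j}_R(M^*,N)$ for $j\ge 1$ (again by \eqref{t}), so we obtain $\Ext^{j}_R(M^*,N)=0$ for $1\le j\le n-1$.

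Next I would relate the outer terms of \eqref{1.2} to these vanishings. Applying $(-)^*$ to a finite free presentation $P_1\to P_0\to M\to 0$ produces, via \eqref{1.1}, the exact sequence $0\to M^*\to P_0^*\to P_1^*\to \Tr M\to 0$; breaking it at the syzygy $\Omega:=\operatorname{im}(P_0^*\to P_1^*)$ gives two short exact sequences whose long exact $\Ext_R(-,N)$ sequences yield dimension shifts $\Ext^i_R(\Tr M,N)\cong\Ext^{i-2}_R(M^*,N)$ for $i\ge 3$ and a four-term exact piece for $i=1,2$ linking $\Ext^1_R(\Tr M,N)$, $\Ext^2_R(\Tr M,N)$ to $\Hom_R(M^*,N)$, $\Ext^1_R(M^*,N)$, etc. In particular the two terms $\Ext^1_R(\Tr M,N)$ and $\Ext^2_R(\Tr M,N)$ appearing in \eqref{1.2} are governed by $\Ext^{\le 0}$ and $\Ext^{1}$ of $M^*$ against $N$; more usefully, for the \emph{higher} $\Ext^i_R(\Tr M,N)$ with $i\le n$ we get $\Ext^i_R(\Tr M,N)\cong\Ext^{i-2}_R(M^*,N)=0$ once $1\le i-2\le n-1$, i.e. for $3\le i\le n+1$. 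The point of this step is that it lets me estimate $\depth_R\!\bigl(\Ext^1_R(\Tr M,N)\bigr)$ and $\depth_R\!\bigl(\Ext^2_R(\Tr M,N)\bigr)$ from below, using the standard fact that if the first $t$ (cohomological) $\Ext$'s of a module against $N$ vanish then the depth of the bottom nonvanishing one is at least roughly $\min\{t,\depth_R N\}$; this is where the bound $\min\{n+1,\depth_R N\}$ will come from.

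With these depth estimates in hand, I would read \eqref{1.2} as two short exact sequences
$$0\to \Ext^1_R(\Tr M,N)\to M\otimes_RN\to \operatorname{im}\eta\to 0,\qquad 0\to \operatorname{im}\eta\to \Hom_R(M^*,N)\to \Ext^2_R(\Tr M,N)\to 0,$$
and apply the depth lemma (the inequality $\depth_R X\ge\min\{\depth_R X',\depth_R X''\}$ for a short exact sequence $0\to X'\to X\to X''\to 0$, and $\depth_R X''\ge\min\{\depth_R X-1,\depth_R X''\}$-type shifts). From the second sequence, $\depth_R(\operatorname{im}\eta)\ge\min\{\depth_R\Hom_R(M^*,N),\ \depth_R\Ext^2_R(\Tr M,N)+1\}$, and $\depth_R\Hom_R(M^*,N)\ge\min\{2,\depth_R N\}$ always, improved here to $\ge\min\{n+1,\depth_R N\}$ because the relevant low $\Ext^i_R(M^*,N)$ vanish; from the first sequence, $\depth_R(M\otimes_RN)\ge\min\{\depth_R\Ext^1_R(\Tr M,N),\ \depth_R(\operatorname{im}\eta)\}$, and $\depth_R\Ext^1_R(\Tr M,N)\ge\min\{n+1,\depth_R N\}$ for the same reason. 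Combining gives $\depth_R(M\otimes_RN)\ge\min\{n+1,\depth_R N\}$, as claimed. (When $n=0$ the statement is just the classical $\depth_R(M\otimes_RN)\ge\min\{1,\depth_R N\}$, which also follows directly from \eqref{1.2} since $\Ext^1_R(\Tr M,N)\hookrightarrow M\otimes_RN$ and $M\otimes_RN$ maps to $\Hom_R(M^*,N)$.)

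I expect the main obstacle to be pinning down the precise depth lower bound for the $\Ext$-modules $\Ext^1_R(\Tr M,N)$ and $\Ext^2_R(\Tr M,N)$ — that is, converting "the first several $\Ext$'s against $N$ vanish" into "this $\Ext$ has depth at least $\min\{n+1,\depth_R N\}$" — cleanly and with the right index count, since the transpose bookkeeping shifts degrees by $2$ and one must be careful that the Tate-to-ordinary comparison in nonpositive degrees (via Theorem \ref{tate}(iii)) is applied only where valid. Everything else is a routine chase through \eqref{1.1}, \eqref{1.2} and the depth lemma.
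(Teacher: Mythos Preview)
Your proposal has a genuine gap at the step where you try to bound $\depth_R(\Ext^1_R(\Tr M,N))$ and $\depth_R(\Ext^2_R(\Tr M,N))$ from below. The ``standard fact'' you invoke---that vanishing of the first $t$ groups $\Ext^i$ against $N$ forces the depth of the next nonvanishing one to be at least roughly $\min\{t,\depth_R N\}$---is not true in general (for $R$ regular local of dimension $d$, the groups $\Ext^i_R(k,R)$ vanish for $0\le i\le d-1$ but $\Ext^d_R(k,R)\cong k$ has depth $0$). Your $n=0$ argument is likewise incomplete: the maps $\Ext^1_R(\Tr M,N)\hookrightarrow M\otimes_R N\to\Hom_R(M^*,N)$ exist for any $M$ and $N$ without hypothesis and cannot by themselves give $\depth_R(M\otimes_RN)\ge\min\{1,\depth_R N\}$.

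What you are missing is that $\Ext^1_R(\Tr M,N)$ and, for $n\ge 1$, also $\Ext^2_R(\Tr M,N)$ are actually \emph{zero}, so no depth estimate on them is needed at all. You correctly obtained $\widehat{\Ext}^{\,j}_R(M^*,N)=0$ for $-1\le j\le n-1$, but then discarded the cases $j=-1,0$ because they do not compare directly to ordinary $\Ext$ of $M^*$. The point is to dimension-shift in \emph{Tate} cohomology instead: since $M^*\cong\Omega^2\Tr M$ by \eqref{1.1} and $\Tr M$ is totally reflexive by Theorem~\ref{G}(i), one has $\widehat{\Ext}^{\,j}_R(M^*,N)\cong\widehat{\Ext}^{\,j+2}_R(\Tr M,N)\cong\Ext^{j+2}_R(\Tr M,N)$ for $j\ge -1$, and the hypothesis then yields $\Ext^i_R(\Tr M,N)=0$ for all $1\le i\le n+1$. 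This is exactly the paper's route: for $n\ge 1$ the sequence \eqref{1.2} collapses to an isomorphism $M\otimes_RN\cong\Hom_R(M^*,N)$, after which the depth bound for $\Hom_R(M^*,N)$ (via a free resolution of $M^*$ and the vanishing $\Ext^i_R(M^*,N)=0$ for $1\le i\le n-1$, which you already have) finishes the argument; for $n=0$ one gets an injection $M\otimes_RN\hookrightarrow\Hom_R(M^*,N)$, whence $\Ass_R(M\otimes_RN)\subseteq\Ass_R(N)$.
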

\begin{proof}
As $M^*=\Omega^2\Tr M$, by Theorem \ref{tate}(iii), Theorem \ref{G}(i) and (\ref{t}), we have the following isomorphisms
\[\begin{array}{rl}\tag{\ref{l2}.1}
\Tt_i^R(M,N)&\cong\widehat{\Ext}^{-i-1}_R(M^*,N)\\
&\cong\widehat{\Ext}^{-i+1}_R(\Tr M,N)\\
&\cong\Ext^{-i+1}_R(\Tr M,N)=0,
\end{array}\]
for all $-n\leq i\leq0$. If $n=0$ then we get the following exact sequence: $M\otimes_RN\hookrightarrow\Hom_R(M^*,N)$ by (\ref{l2}.1) and the exact sequence (\ref{1.2}).
Therefore, $\Ass_R(M\otimes_RN)\subseteq\Ass_R(N)$ and so the assertion is clear. If $n\geq1$ then by (\ref{l2}.1) and the exact sequence (\ref{1.2}) we get the following isomorphism,
\begin{equation}\tag{\ref{l2}.2}
M\otimes_RN\cong\Hom_R(M^*,N).
\end{equation}
Hence, $\depth_R(M\otimes_RN)=\depth_R(\Hom_R(M^*,N))\geq\min\{2,\depth_R(N)\}$ and so the assertion is clear for $n=1$. Now let $n>1$ and let
$\mathbf{P}\rightarrow M^*\rightarrow0$ be a free resolution of $M^*$.
Note that $\Ext^i_R(M^*,N)=0$ for $1\leq i\leq n-1$ by (\ref{l2}.1). Hence, by applying the functor $\Hom_R(-,N)$ to the free resolution of $M^*$, we get the
following exact sequence:
\begin{equation}\tag{\ref{l2}.3}
0\rightarrow\Hom_R(M^*,N)\rightarrow\Hom_R(P_0,N)\rightarrow\cdots\rightarrow\Hom_R(P_{n},N).
\end{equation}
Note that $\depth_R(\Hom_R(P_i,N))=\depth_R(N)$ for all $i\geq0$. It follows easily from the exact sequence (\ref{l2}.3) that
\begin{equation}\tag{\ref{l2}.4}
\depth_R(\Hom_R(M^*,N))\geq\min\{n+1,\depth_R(N)\}.
\end{equation}
Now the assertion follows from (\ref{l2}.2) and (\ref{l2}.4).
\end{proof}
\begin{thm}\label{t1}
Let $R$ be a Cohen-Macaulay local ring of dimension $d$ and let $M$ be an $R$--module of finite Gorenstein dimension and of depth $n$. Assume that $N$ is an $R$--module and that the following conditions hold:
\begin{enumerate}[(i)]
\item{$\Tt_i^R(M,N)=0$ for $-n<i\leq 0$.}
\item{$\gt_i^R(M,N)=0$ for $1\leq i\leq d-n$ (equivalently, for all $i>0$).}
\end{enumerate}
Then either $\depth_R(M\otimes_RN)+\depth R=\depth_R(M)+\depth_R(N)$ or \\ $\depth_R(M\otimes_RN)\geq\depth_R(M)$ .
\end{thm}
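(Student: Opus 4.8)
The plan is to reduce the general statement to the totally reflexive case handled by Lemma \ref{l2}, using induction on $g = \gd_R(M)$ together with the Auslander--Bridger formula. When $g = 0$, the module $M$ is totally reflexive; here $n = \depth_R(M) = \depth R = d$ (by Theorem \ref{G}(ii)), condition (i) says $\Tt_i^R(M,N) = 0$ for $-d < i \le 0$, and Lemma \ref{l2} (applied with its $n$ taken to be $d-1$) gives $\depth_R(M\otimes_R N) \ge \min\{d, \depth_R(N)\}$. If $\depth_R(N) \le d$ this yields $\depth_R(M\otimes_R N) \ge \depth_R(N) = \depth_R(M) + \depth_R(N) - \depth R$, and in fact one expects equality from the general inequality $\depth_R(M\otimes_R N) \le \depth_R(N)$ (which holds because $M\otimes_R N$ is a quotient of a free module tensored with $N$, or via associated primes); if $\depth_R(N) = d$ as well, then $\depth_R(M\otimes_R N) \ge d = \depth_R(M)$, landing in the second alternative. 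So the base case splits into exactly the two stated possibilities.

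For the inductive step with $g \ge 1$, I would take a short exact sequence $0 \to K \to F \to M \to 0$ with $F$ free and $K$ a $(g-1)$st syzygy, so $\gd_R(K) = g - 1$ and, by Auslander--Bridger, $\depth_R(K) = \min\{d, n+1\}$ — but since $R$ is Cohen--Macaulay and $g \ge 1$ forces $n < d$, we get $\depth_R(K) = n+1$. Tensoring with $N$ gives the long exact sequence $\cdots \to \Tor_1^R(M,N) \to K\otimes_R N \to F\otimes_R N \to M\otimes_R N \to 0$. The key point is to control $\Tor_1^R(M,N)$: condition (ii) together with Theorem \ref{rel}(ii) and the vanishing of the relative homology $\gt_i^R(M,N)$ for $i > 0$ pins down the comparison between $\Tor$ and $\widehat{\Tor}$; more precisely, Theorem \ref{rel}(ii) gives the exact sequence relating $\gt_2^R(M,N)$, $\Tt_1^R(M,N)$, $\Tor_1^R(M,N)$, $\gt_1^R(M,N)$, and with $\gt_1^R(M,N) = \gt_2^R(M,N) = 0$ one obtains $\Tor_1^R(M,N) \cong \Tt_1^R(M,N)$. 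One then needs to handle the higher $\gt$ and $\widehat{\Tor}$ for the syzygy module $K$: dimension-shifting along the short exact sequence translates the hypotheses on $M$ into the corresponding hypotheses on $K$ (the relative homology shifts, $\gt_i^R(K,N) \cong \gt_{i+1}^R(M,N)$ for $i \ge 1$, and similarly for Tate homology the standard shift $\Tt_i^R(K,N) \cong \Tt_{i+1}^R(M,N)$ holds since $F$ is free, using Theorem \ref{tate}(i)). Thus $K$ satisfies the hypotheses of the theorem with $n$ replaced by $n+1$, and induction applies to the pair $(K,N)$.

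From the inductive conclusion for $(K,N)$ — either $\depth_R(K\otimes_R N) + \depth R = \depth_R(K) + \depth_R(N)$, i.e. $= (n+1) + \depth_R(N)$, or $\depth_R(K\otimes_R N) \ge \depth_R(K) = n+1$ — I would feed the result back through the four-term exact sequence $0 \to (\text{image of } \Tor_1) \to K\otimes_R N \to F\otimes_R N \to M\otimes_R N \to 0$, split into two short exact sequences via the module $C = \coker(\Tor_1^R(M,N) \to K\otimes_R N) = \mathrm{im}(K\otimes_R N \to F\otimes_R N)$. A depth count on $0 \to C \to F\otimes_R N \to M\otimes_R N \to 0$ gives $\depth_R(M\otimes_R N) \ge \min\{\depth_R(F\otimes_R N), \depth_R(C) - 1\} = \min\{\depth_R(N), \depth_R(C)-1\}$, and on $0 \to \Tt_1^R(M,N) \to K\otimes_R N \to C \to 0$ — where now $\Tt_1^R(M,N) = \Tor_1^R(M,N)$ has finite length or at least small depth; actually the cleaner route is to observe that condition (i) should also force $\Tt_1^R(M,N) = 0$ in the relevant range once $n \ge 1$, giving $C \cong K \otimes_R N$ outright, or else to absorb its contribution into the minimum — one reads off $\depth_R(C) \ge \min\{\depth_R(K\otimes_R N), \ldots\}$. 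Chasing the two alternatives for $(K,N)$ through these inequalities and using $n+1 - 1 = n$ is what produces precisely the dichotomy "$\depth_R(M\otimes_R N) = \depth_R(M) + \depth_R(N) - \depth R$ or $\depth_R(M\otimes_R N) \ge \depth_R(M) = n$."

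The main obstacle I anticipate is the bookkeeping around $\Tor_1^R(M,N)$ and its syzygy-analogues in the exact sequence: one must verify that the hypotheses on $M$ really do propagate to $K$ with the shifted index $n+1$ (in particular that condition (i)'s range $-n < i \le 0$ becomes $-(n+1) < i \le 0$ for $K$, which is exactly the shift supplied by $F$ being free), and that the extra term $\Tt_1^R(M,N) \cong \Tor_1^R(M,N)$ appearing between $K\otimes_R N$ and $C$ does not degrade the depth estimate below what is needed — this requires knowing $\depth_R\big(\Tt_1^R(M,N)\big)$ is large enough or that this module vanishes, which should follow from hypothesis (i) applied one index further, or from a local analysis via Lemma \ref{l3}. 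Making that step airtight, rather than the outer induction, is where the real work lies.
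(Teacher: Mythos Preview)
Your induction on $\gd_R(M)$ via syzygies has genuine gaps at both ends. In the base case $g=0$, Lemma~\ref{l2} gives only $\depth_R(M\otimes_RN)\ge\min\{d,\depth_R(N)\}=\depth_R(N)$; your claimed reverse inequality $\depth_R(M\otimes_RN)\le\depth_R(N)$ is simply false---a quotient of $N^{r}$ can have strictly larger depth than $N$, and there is no containment $\Ass_R(N)\subseteq\Ass_R(M\otimes_RN)$. Example~\ref{e1} already shows this: there $M$ is totally reflexive, $\depth_R(N)=0$, and $\depth_R(M\otimes_RN)=1$. In that example the second alternative happens to hold because $1=d$, but nothing in your argument excludes the possibility $\depth_R(N)<\depth_R(M\otimes_RN)<d$; ruling out that intermediate range is precisely the nontrivial content of the theorem. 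In the inductive step, the shift $\Tt_i^R(K,N)\cong\Tt_{i+1}^R(M,N)$ means that condition~(i) for $K$ (whose depth is $n+1$) would require $\Tt_1^R(M,N)=0$. Hypothesis~(i) stops at $i=0$; Lemma~\ref{l3} bounds $\gt$, not $\Tt$; and the isomorphism $\Tt_1\cong\Tor_1$ coming from Theorem~\ref{rel}(ii) together with $\gt_{>0}=0$ does not force vanishing. Indeed, in Example~\ref{e1} one computes $\Tor_1^R(M,N)\cong k\ne 0$, so $\Tt_1^R(M,N)\ne 0$ by~(\ref{t}); thus the term $\Tor_1^R(M,N)\to K\otimes_RN$ in your four-term sequence is genuinely present and your depth chase cannot close.

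The paper bypasses both obstacles by replacing syzygies with approximations: it assumes $\depth_R(M\otimes_RN)<n$ and takes a maximal Cohen--Macaulay approximation $0\to Q\to Y\to N\to 0$ with $\id_R(Q)<\infty$, together with a sequence $0\to X\to G\to M\to 0$ with $\pd_R(X)<\infty$ and $G$ totally reflexive. Because $Q$ and $X$ have finite homological dimension, all Tate and relative homology against them vanish outright, so the classical depth formula (Lemma~\ref{l1}) applies to the pairs $(M,Q)$ and $(X,Y)$, while Lemma~\ref{l2} handles $(G,Y)$. Chasing depths through the two resulting short exact sequences then pins down $\depth_R(M\otimes_RN)$ exactly. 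The key point is that these approximations trade the uncontrolled $\Tt_1$ and $\Tor_1$ in a syzygy sequence for auxiliary modules on which the needed vanishing is automatic.
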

\begin{proof}
Assume that $\depth_R(M\otimes_RN)<\depth_R(M)$. We must prove that the pair $(M,N)$ satisfies the depth formula.
Without loss of generality, we may assume that $R$ is Cohen-Macaulay ring with canonical module.
Therefore, by \cite[Theorem A]{AuBu}, $N$ has a maximal Cohen-Macaulay approximation:
\begin{equation}\tag{\ref{t1}.1}
0\longrightarrow Q\longrightarrow Y\longrightarrow N\longrightarrow0,
\end{equation}
Where $Y$ is maximal Cohen-Macaulay and $Q$ has finite injective dimension. The exact sequence (\ref{t1}.1) induces the following long exact sequence
\begin{equation}\tag{\ref{t1}.2}
\cdots\rightarrow\Tt_i^R(M,Q)\rightarrow\Tt_i^R(M,Y)\rightarrow\Tt_i^R(M,N)\rightarrow\cdots,
\end{equation}
of Tate homology modules by \cite[Proposition 2.8]{CJ1}. As $\id_R(Q)<\infty$, $\Tt_i^R(M,Q)=0$ for all $i\in\ZZ$ by Theorem \ref{tate}(iv). Therefore, $\Tt_i^R(M,Y)\cong\Tt_i^R(M,N)$ for all $i\in\ZZ$ by (\ref{t1}.2).
In particular, by (i)
\begin{equation}\tag{\ref{t1}.3}
\Tt_i^R(M,Y)=0 \text{ for } -n<i\leq 0.
\end{equation}
As $\id_R(Q)<\infty$, the exact sequence (\ref{t1}.1) is proper and it induces the following long exact sequence
\[\begin{array}{rl}\tag{\ref{t1}.4}
\cdots\rightarrow\gt_1^R(M,Q)\rightarrow\gt_1^R(M,Y)\rightarrow\gt_1^R(M,N)\rightarrow\\
M\otimes_RQ\rightarrow M\otimes_R Y\rightarrow M\otimes_RN\rightarrow0,
\end{array}\]
of relative homology modules by the homology version of \cite[Proposition 4.4]{AvM} (see also \cite[Remark 7.4]{AvM}). By (ii), $\gt_1^R(M,N)=0$ and so we have the following exact sequence
\begin{equation}\tag{\ref{t1}.5}
0\rightarrow M\otimes_RQ\rightarrow M\otimes_R Y\rightarrow M\otimes_RN\rightarrow0,
\end{equation}
by (\ref{t1}.4). As $Y$ is maximal Cohen-Macaulay,
\begin{equation}\tag{\ref{t1}.6}
\gt_i^R(M,Y)=0 \text{ for all } i>0,
\end{equation}
by Lemma \ref{l3}.
It follows from (\ref{t1}.4), (\ref{t1}.6) and (ii) that $\gt_i^R(M,Q)=0$ for all $i>0$. Therefore, by Lemma \ref{l1} and Theorem \ref{rel}(iv)
\begin{equation}\tag{\ref{t1}.7}
\depth R+\depth_R(M\otimes_RQ)=\depth_R(M)+\depth_R(Q).
\end{equation}
Now consider the following exact sequence
\begin{equation}\tag{\ref{t1}.8}
0\longrightarrow X\longrightarrow G\longrightarrow M\longrightarrow0,
\end{equation}
where $\pd_R(X)<\infty$ and $\gd_R(G)=0$. As $X$ has a finite projective dimension, $\Tt_i^R(X,Y)=0$ for all $i\in\ZZ$ by Theorem \ref{tate}(i). Therefore, the exact sequence (\ref{t1}.8) induces the following isomorphism $\Tt_i^R(M,Y)\cong\Tt_i^R(G,Y)$ for all $i\in\ZZ$ by \cite[Proposition 2.9]{CJ1}. In particular, by (\ref{t1}.3) we have
\begin{equation}\tag{\ref{t1}.9}
\Tt_i^R(G,Y)=0 \text{ for } -n< i\leq 0.
\end{equation}
As $Y$ is maximal Cohen-Macaulay, it follows from (\ref{t1}.9) and Lemma \ref{l2} that
\begin{equation}\tag{\ref{t1}.10}
\depth_R(G\otimes_RY)\geq n.
\end{equation}
As $\pd_R(X)<\infty$ and $Y$ is maximal Cohen-Macaulay, $\Tor_i^R(X,Y)=0$ for all $i>0$ by \cite[Lemma 2.2]{Yo} and so
\begin{equation}\tag{\ref{t1}.11}
\depth_R(X\otimes_RY)=\depth_R(X)=n+1.
\end{equation}
by Lemma \ref{l1}. As $\pd_R(X)<\infty$, the exact sequence (\ref{t1}.8) is proper
and it induces the following long exact sequence
\begin{equation}\tag{\ref{t1}.12}
0\rightarrow X\otimes_RY\rightarrow G\otimes_R Y\rightarrow M\otimes_RY\rightarrow0.
\end{equation}
by the homology version of \cite[Proposition 4.6]{AvM} and (\ref{t1}.6).
It follows easily from (\ref{t1}.10), (\ref{t1}.11) and the exact sequence (\ref{t1},12) that
\begin{equation}\tag{\ref{t1}.13}
\depth_R(M\otimes_RY)\geq n.
\end{equation}
As $\depth_R(M\otimes_RN)<n$, by (\ref{t1}.13) and the exact sequence (\ref{t1}.5)
\begin{equation}\tag{\ref{t1}.14}
\depth_R(M\otimes_RN)=\depth_R(M\otimes_RQ)-1.
\end{equation}
Note that $\depth_R(N)=\depth_R(Q)-1$. Therefore the assertion follows from (\ref{t1}.14) and (\ref{t1}.7).
\end{proof}
As a consequence we have the following result.
\begin{cor}
Let $R$ be a Cohen-Macaulay local ring of dimension $d$ and let $M$, $N$ be $R$--modules. Assume that $M$ is totally reflexive and that
$\Tt_i^R(M,N)=0$ for $-d<i\leq 0$. Then either $M\otimes_RN$ is maximal Cohen-Macaulay or $\depth_R(M\otimes_RN)=\depth_R(N)$.
\end{cor}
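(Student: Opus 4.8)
The plan is to derive this corollary directly from Theorem \ref{t1} by taking $M$ to be a totally reflexive module, so that $M$ has finite Gorenstein dimension (in fact $\gd_R(M)=0$) and, by the Auslander--Bridger formula (Theorem \ref{G}(ii)), $\depth_R(M)=\depth R=d$; thus the "$n$" appearing in Theorem \ref{t1} equals $d$. With $n=d$, hypothesis (i) of Theorem \ref{t1}, namely $\Tt_i^R(M,N)=0$ for $-n<i\le 0$, becomes exactly the hypothesis $\Tt_i^R(M,N)=0$ for $-d<i\le 0$ that we are assuming. For hypothesis (ii) of Theorem \ref{t1} we need $\gt_i^R(M,N)=0$ for all $i>0$; since $\gd_R(M)=0$ we already have $\gt_i^R(M,N)=0$ for all $i>0=\gd_R(M)$ by the remark following the definition of $\G$-relative homology, so (ii) holds automatically. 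Hence Theorem \ref{t1} applies.

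Next I would unwind the two alternatives that Theorem \ref{t1} delivers. In the first case, $\depth_R(M\otimes_RN)+\depth R=\depth_R(M)+\depth_R(N)$; substituting $\depth_R(M)=\depth R$ and cancelling gives $\depth_R(M\otimes_RN)=\depth_R(N)$, which is the second conclusion of the corollary. In the second case, $\depth_R(M\otimes_RN)\ge \depth_R(M)=d$, and since $\depth_R(M\otimes_RN)\le d$ always (the ring has dimension $d$, so every module has depth at most $d$), we get $\depth_R(M\otimes_RN)=d$, i.e. $M\otimes_RN$ is maximal Cohen--Macaulay (here one should note $M\otimes_RN\neq 0$, which holds by Nakayama since $M,N$ are finitely generated and nonzero). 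This yields the first conclusion.

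The only real point requiring care is the bookkeeping between the "$n$" of Theorem \ref{t1} and the hypotheses of the corollary, together with checking that hypothesis (ii) of that theorem is vacuously satisfied for a totally reflexive module; neither of these is a genuine obstacle. I do not expect any hard step: the corollary is essentially a specialization of Theorem \ref{t1} to the totally reflexive (equivalently, maximal Cohen--Macaulay of Gorenstein dimension zero) case, where $\depth_R(M)=\depth R$ collapses the depth formula into the equality $\depth_R(M\otimes_RN)=\depth_R(N)$.
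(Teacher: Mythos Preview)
Your proof is correct and follows essentially the same approach as the paper: apply Theorem~\ref{t1} (noting that $\gt_i^R(M,N)=0$ for $i>0$ since $M$ is totally reflexive), and then use the Auslander--Bridger formula to identify $\depth_R(M)=d$ so that the two alternatives of Theorem~\ref{t1} become exactly the two conclusions of the corollary. The paper's proof is just a terser version of what you wrote.
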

\begin{proof}
First note that $\gt_i^R(M,N)=0$ for all $i>0$, because $M$ is totally reflexive. Therefore, by Theorem \ref{t1}, either the pair $(M,N)$ satisfies depth formula or
$\depth_R(M\otimes_RN)\geq\depth_R(M)$. Now the assertion follows from Auslander-Bridger formula.
\end{proof}
\begin{thm}\label{t5}
Let $R$ be a Cohen-Macaulay local ring of dimension $d$ and let $M$, $N$ be $R$--modules. Assume that $M$ has finite Gorenstein dimension and that the following conditions hold:
\begin{enumerate}[(i)]
\item{$\Tt_i^R(M,N)=0$ for $-d<i\leq 0$.}
\item{$\gt_i^R(M,N)=0$ for all $i>0$.}
\end{enumerate}
Then either $\depth_R(M\otimes_RN)+\depth R=\depth_R(M)+\depth_R(N)$ or \\ $\depth_R(M\otimes_RN)\geq\depth_R(N)$.
\end{thm}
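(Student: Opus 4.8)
The plan is to dispatch the easy range of depths by invoking Theorem~\ref{t1} directly, and to handle the remaining range by running an Auslander--Buchweitz approximation of $M$ through the short exact sequences that already appear in the proof of Theorem~\ref{t1}. Note first that Theorem~\ref{t1} applies to $(M,N)$: hypothesis (ii) is identical to (ii) above, and since $\depth_R(M)\le d$ the interval $-\depth_R(M)<i\le 0$ lies inside $-d<i\le 0$, so (i) above implies hypothesis (i) of Theorem~\ref{t1}. If $\depth_R(M)\ge\depth_R(N)$ we are then done, since each alternative produced by Theorem~\ref{t1} implies one of the two alternatives we want---the second because in that case $\depth_R(M\otimes_RN)\ge\depth_R(M)\ge\depth_R(N)$. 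So assume henceforth $\depth_R(M)<\depth_R(N)$; in particular $\gd_R(M)=d-\depth_R(M)\ge 1$.

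Choose, as in (\ref{t1}.8), an exact sequence $0\to X\to G\to M\to 0$ with $G$ totally reflexive and $\pd_R(X)=\gd_R(M)-1$, so $\depth_R(X)=\depth_R(M)+1$ by the Auslander--Buchsbaum formula. Since $\pd_R(X)<\infty$, Theorem~\ref{tate}(i) gives $\Tt_i^R(X,N)=0$ for all $i$, hence $\Tt_i^R(G,N)\cong\Tt_i^R(M,N)$ for all $i$, and so $\Tt_i^R(G,N)=0$ for $-d<i\le 0$ by (i). Also, $\pd_R(X)<\infty$ makes the sequence $\G$-proper, so---as in the proof of Theorem~\ref{t1} (compare (\ref{t1}.12))---applying $-\otimes_RN$ and using $\gt_1^R(M,N)=0$ produces a short exact sequence
\[
0\longrightarrow X\otimes_RN\longrightarrow G\otimes_RN\longrightarrow M\otimes_RN\longrightarrow 0,
\]
while the same long exact sequence, together with $\gt_i^R(M,N)=0$ for $i\ge 2$ and $\gt_i^R(G,N)=0$ for $i\ge 1$, yields $\Tor_i^R(X,N)=\gt_i^R(X,N)=0$ for all $i\ge 1$.

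Next I would read off depths along the displayed sequence. As $G$ is totally reflexive with $\Tt_i^R(G,N)=0$ for $-(d-1)\le i\le 0$, Lemma~\ref{l2} gives $\depth_R(G\otimes_RN)\ge\min\{d,\depth_R(N)\}=\depth_R(N)$; and as $\pd_R(X)<\infty$ with $\Tor_i^R(X,N)=0$ for $i>0$, Lemma~\ref{l1} gives $\depth_R(X\otimes_RN)=\depth_R(X)+\depth_R(N)-d=\depth_R(M)+\depth_R(N)+1-d$. If $\gd_R(M)\ge 2$ then $\depth_R(X\otimes_RN)<\depth_R(N)\le\depth_R(G\otimes_RN)$, so the depth lemma forces $\depth_R(M\otimes_RN)=\depth_R(X\otimes_RN)-1=\depth_R(M)+\depth_R(N)-d$, i.e., the depth formula holds. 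If $\gd_R(M)=1$ then $\depth_R(M)=d-1$ and $\depth_R(X\otimes_RN)=\depth_R(N)$, so the depth lemma gives only $\depth_R(M\otimes_RN)\ge\min\{\depth_R(X\otimes_RN)-1,\depth_R(G\otimes_RN)\}=\depth_R(N)-1=\depth_R(M)+\depth_R(N)-d$; hence either this is an equality, which is the depth formula, or $\depth_R(M\otimes_RN)\ge\depth_R(N)$.

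I expect the main obstacle to be the case $\gd_R(M)=1$ above: there $X\otimes_RN$ and $G\otimes_RN$ may share the same depth, so the depth lemma yields only the bound $\depth_R(M)+\depth_R(N)-d$ and not the depth formula outright---which is precisely why the conclusion has to be a dichotomy. A subordinate point is to ensure that the approximation sequence of $M$ stays short exact after $-\otimes_RN$ while simultaneously controlling the Tate homology $\Tt_i^R(G,N)$ in the range required by Lemma~\ref{l2} and the modules $\Tor_i^R(X,N)$ for $i>0$ required by Lemma~\ref{l1}; both follow from $\pd_R(X)<\infty$ together with the vanishing hypotheses.
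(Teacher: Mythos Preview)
Your argument is correct. Both you and the paper use the same Gorenstein approximation $0\to X\to G\to M\to 0$ with $G$ totally reflexive and $\pd_R(X)<\infty$, obtain the short exact sequence $0\to X\otimes_RN\to G\otimes_RN\to M\otimes_RN\to 0$ from $\gt_1^R(M,N)=0$, and read off the depth of $M\otimes_RN$ via the depth lemma after computing $\depth_R(X\otimes_RN)$ by Lemma~\ref{l1}. The difference lies in how the bound $\depth_R(G\otimes_RN)\ge\depth_R(N)$ is obtained and how the case analysis is organised. You apply Lemma~\ref{l2} directly to the pair $(G,N)$ with $n=d-1$, which already yields $\depth_R(G\otimes_RN)\ge\min\{d,\depth_R(N)\}=\depth_R(N)$; the paper instead passes through an Auslander--Buchweitz hull $0\to N\to I\to L\to 0$, shifts the Tate vanishing to $(G,L)$, applies Lemma~\ref{l2} to that pair, and then recovers the bound from the induced short exact sequence in $G\otimes_R(-)$. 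Your route is shorter and avoids the hull entirely. On the other hand, the paper assumes $\depth_R(M\otimes_RN)<\depth_R(N)$ from the outset, which forces $\depth_R(X\otimes_RN)<\depth_R(G\otimes_RN)$ and hence $\depth_R(M\otimes_RN)=\depth_R(X\otimes_RN)-1$ uniformly, with no need for your preliminary appeal to Theorem~\ref{t1} or the case split $\gd_R(M)\ge2$ versus $\gd_R(M)=1$. Either organisation is fine; your direct use of Lemma~\ref{l2} is a genuine streamlining, while the paper's framing avoids the case analysis.
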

\begin{proof}
Assume that $\depth_R(M\otimes_RN)<\depth_R(N)$. We must prove that the pair $(M,N)$ satisfies the depth formula.
Without loss of generality, we may assume that $R$ is Cohen-Macaulay ring with canonical module.
Consider the following exact sequence,
\begin{equation}\tag{\ref{t5}.1}
0\longrightarrow T\longrightarrow G\longrightarrow M\longrightarrow0,
\end{equation}
where $G$ is totally reflexive and $T$ has finite projective dimension. Therefore, $\Tt_i^R(T,N)=0$ for all $i\in\ZZ$ by Theorem \ref{tate}(i) and so
the exact sequence (\ref{t5}.1) induces the following isomorphism
\begin{equation}\tag{\ref{t5}.2}
\Tt_i^R(G,N)\cong\Tt_i^R(M,N) \text{ for all } i\in\ZZ,
\end{equation}
by \cite[Proposition 2.9]{CJ1}. Now consider the following exact sequence
\begin{equation}\tag{\ref{t5}.3}
0\longrightarrow N\longrightarrow I\longrightarrow L\longrightarrow0,
\end{equation}
where $L$ is maximal Cohen-Macaulay and $I$ has finite injective dimension (see \cite[Theorem A]{AuBu}).
By Theorem \ref{tate}(iv), $\Tt_i^R(G,I)=0$ for all $i\in\ZZ$. Hence, the exact sequence (\ref{t5}.3) induces the following isomorphism
\begin{equation}\tag{\ref{t5}.4}
\Tt_i^R(G,N)\cong\Tt_{i+1}^R(G,L) \text{ for all } i\in\ZZ,
\end{equation}
by \cite[Proposition 2.8]{CJ1}. It follows from (i), (\ref{t5}.2) and (\ref{t5}.4) that
\begin{equation}\tag{\ref{t5}.5}
\Tt_i^R(G,L)=0 \text{ for } -d+1<i\leq 1.
\end{equation}
As $G$ is totally reflexive, $\Tor_1^R(G,L)=0$ by (\ref{t}) and (\ref{t5}.5). Hence, the exact sequence (\ref{t5}.3) induces the following exact sequence:
\begin{equation}\tag{\ref{t5}.6}
0\rightarrow N\otimes_RG\rightarrow I\otimes_RG\rightarrow L\otimes_RG\rightarrow0.
\end{equation}
It follows from Lemma \ref{l2} and (\ref{t5}.5) that $\depth_R(L\otimes_RG)\geq d-1$. Therefore, by the exact sequence (\ref{t5}.6),
\begin{equation}\tag{\ref{t5}.7}
\depth_R(N\otimes_RG)\geq\min\{\depth_R(I\otimes_RG),d\}=\depth_R(I\otimes_RG).
\end{equation}
As $\id_R(I)<\infty$ and $G$ is totally reflexive, $\Tor_i^R(G,I)\cong\gt_i^R(G,I)=0$ for all $i>0$ by Theorem \ref{rel}(iv) and Lemma \ref{l3}.
Hence, by Lemma \ref{l1} the pair $(G,I)$ satisfies depth formula and so by Auslander-Bridger formula we have
\begin{equation}\tag{\ref{t5}.8}
\depth_R(I\otimes_RG)=\depth_R(I)=\depth_R(N).
\end{equation}
As $\pd_R(T)<\infty$, the exact sequence (\ref{t5}.1) is proper and it induces the following long exact sequence
\[\begin{array}{rl}\tag{\ref{t5}.9}
\cdots\rightarrow\gt_1^R(T,N)\rightarrow\gt_1^R(G,N)\rightarrow\gt_1^R(M,N)\rightarrow\\
T\otimes_RN\rightarrow G\otimes_R N\rightarrow M\otimes_RN\rightarrow0,
\end{array}\]
of relative homology modules. By (ii) and the long exact sequence (\ref{t5}.9) we obtain the following exact sequence
\begin{equation}\tag{\ref{t5}.10}
0\rightarrow T\otimes_RN\rightarrow G\otimes_R N\rightarrow M\otimes_RN\rightarrow0.
\end{equation}
As $G$ is totally reflexive, $\gt_i^R(G,N)=0$ for all $i>0$. Hence, it follows from (ii), Theorem \ref{rel}(i) and the exact sequence (\ref{t5}.9) that $\Tor_i^R(T,N)\cong\gt_i^R(T,N)=0$ for all $i>0$. Therefore, by Lemma \ref{l1} and Auslander-Buchsbaum formula
\[\begin{array}{rl} \tag{\ref{t5}.11}
\depth_R(T\otimes_RN)=\depth_R(N)-\pd_R(T)\\
=\depth_R(N)-\gd_R(M)+1.
\end{array}\]
As $\depth_R(M\otimes_RN)<\depth_R(N)$, it follows from (\ref{t5}.7), (\ref{t5}.8) and the exact sequence (\ref{t5}.10) that
\begin{equation}\tag{\ref{t5}.12}
\depth_R(T\otimes_RN)=\depth_R(M\otimes_R N)+1.
\end{equation}
Now the assertion follows from (\ref{t5}.11), (\ref{t5}.12) and Auslander-Bridger formula.
\end{proof}
The following is an immediate consequence of Theorem \ref{t1} and Theorem \ref{t5}.
\begin{cor}\label{c3}
Let $R$ be a Cohen-Macaulay local ring of dimension $d$ and let $M$, $N$ be $R$--modules. Assume that $M$ has finite Gorenstein dimension and that the following conditions hold:
\begin{enumerate}[(i)]
\item{$\Tt_i^R(M,N)=0$ for $-d<i\leq 0$.}
\item{$\gt_i^R(M,N)=0$ for all $i>0$.}
\end{enumerate}
Then at least one of the following holds:
\begin{enumerate}
\item{$\depth_R(M\otimes_RN)+\depth R=\depth_R(M)+\depth_R(N)$.}
\item{$\depth_R(M\otimes_RN)\geq\max\{\depth_R(M), \depth_R(N)\}$.}
\end{enumerate}
In particular, if either $M$ or $N$ is maximal Cohen-Macaulay then either $\depth_R(M\otimes_RN)=\depth_R(N)$ or $\depth_R(M\otimes_RN)=\depth_R(M)$.
\end{cor}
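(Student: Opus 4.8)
The plan is to derive the corollary by simply combining Theorem~\ref{t1} and Theorem~\ref{t5}; no new argument is needed, and the only things to check are that the hypotheses of those two theorems hold in the present situation and that a short case analysis yields the stated dichotomy.

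First I would verify the hypotheses. Since $R$ is Cohen-Macaulay of dimension $d$ we have $\depth R=d$ and $\depth_R(M)\le\dim_R(M)\le d$; writing $n=\depth_R(M)$, the assumption $\Tt_i^R(M,N)=0$ for $-d<i\le0$ in~(i) then certainly holds on the smaller range $-n<i\le0$, while condition~(ii) is precisely the hypothesis of both theorems (and, as recorded in Theorem~\ref{t1}, is equivalent to the finite-range vanishing $1\le i\le d-n$ that that theorem actually uses). Hence Theorem~\ref{t1} applies and gives: \emph{either} the depth formula holds \emph{or} $\depth_R(M\otimes_RN)\ge\depth_R(M)$; and Theorem~\ref{t5} applies and gives: \emph{either} the depth formula holds \emph{or} $\depth_R(M\otimes_RN)\ge\depth_R(N)$. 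If the depth formula holds we are in case~(1). Otherwise both theorems must deliver their non-formula alternatives, so $\depth_R(M\otimes_RN)\ge\depth_R(M)$ and $\depth_R(M\otimes_RN)\ge\depth_R(N)$ hold simultaneously, i.e.\ $\depth_R(M\otimes_RN)\ge\max\{\depth_R(M),\depth_R(N)\}$, which is case~(2).

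For the final sentence, suppose $M$ is maximal Cohen-Macaulay, so $\depth_R(M)=d=\depth R$. In case~(1) the depth formula reduces to $\depth_R(M\otimes_RN)=\depth_R(N)$; in case~(2) it gives $\depth_R(M\otimes_RN)\ge\max\{d,\depth_R(N)\}=d$, and since $M\otimes_RN\ne0$ we always have $\depth_R(M\otimes_RN)\le d$, so $\depth_R(M\otimes_RN)=d=\depth_R(M)$. When $N$ is maximal Cohen-Macaulay the same reasoning applies with $\depth_R(N)=d$: case~(1) reads $\depth_R(M\otimes_RN)=\depth_R(M)$, and case~(2) forces $\depth_R(M\otimes_RN)=d=\depth_R(N)$. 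I do not expect any genuine obstacle here: all the content is already contained in Theorems~\ref{t1} and~\ref{t5}, and what is left is bookkeeping with the index ranges and with the trivial bound $\depth_R(M\otimes_RN)\le d$.
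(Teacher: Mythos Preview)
Your proposal is correct and follows exactly the approach the paper intends: the paper states the corollary as ``an immediate consequence of Theorem~\ref{t1} and Theorem~\ref{t5}'' without further proof, and you have supplied precisely the bookkeeping (checking the index ranges, combining the two dichotomies, and unwinding the maximal Cohen--Macaulay case via $\depth_R(M\otimes_RN)\le d$) that makes that immediacy explicit.
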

The following example shows that the depth formula may not hold under the assumptions of Corollary \ref{c3}.
In the next Section, we prove that the depth formula holds under some stronger conditions.
\begin{eg}\label{e1}\cite{HW1}
Let $R=k[[x,y]]/(xy)$ and let $M=R/(x)$, $N=R/(x^2)$. Then $R$ is a complete intersection ring with an isolated singularity of dimension one. Note that $M\otimes_RN\cong M$. Therefore, $M$ and $M\otimes_RN$ are maximal Cohen-Macaulay but $N$ is not maximal Cohen-Macaulay. In other words, the pair $(M,N)$ does not satisfy the depth formula. Now we prove that the assumptions of Corollary \ref{c3} hold. First note that $\gt_i^R(M,N)=0$ for all $i>0$, because $M$ is totally reflexive.
Consider the following exact sequence
\begin{equation}\tag{\ref{e1}.1}
0\rightarrow\Ext^1_R(\Tr M,N)\rightarrow M\otimes_RN\rightarrow\Hom_R(M^*,N)\rightarrow\Ext^2_R(\Tr M,N)\rightarrow0.
\end{equation}
Let $\fp\in\Ass_R(\Ext^1_R(\Tr M,N))$. As $M\otimes_RN$ is maximal Cohen-Macaulay, it follows from the exact sequence (\ref{e1}.1) that $\fp\in\Ass_R(M\otimes_RN)\subseteq\Ass R$ which is a contradiction, because $R$ has an isolated singularity. Therefore, by Theorem \ref{tate}(iii) and (\ref{t}), $\Tt_0^R(M,N)\cong\widehat{\Ext}^{-1}_R(M^*,N)\cong\Ext^1_R(\Tr M,N)=0$.
\end{eg}
\section{Main result}
In order to prove our main result, Theorem \ref{MT}, we will prove two Theorems which are of independent interest.
\begin{thm}\label{t2}
Let $R$ be a Cohen-Macaulay local ring of dimension $d$ and let $N$ be an $R$--module of depth $n$. Assume that $M$ is an $R$--module of finite and positive Gorenstein dimension and that the following conditions hold:
\begin{enumerate}[(i)]
\item{$\Tt_i^R(M,N)=0$ for $-n\leq i\leq 0$.}
\item{$\gt_i^R(M,N)=0$ for all $i>0$.}
\end{enumerate}
Then $\depth_R(M\otimes_RN)+\depth R=\depth_R(M)+\depth_R(N)$.
\end{thm}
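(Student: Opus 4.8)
The plan is to reduce to the totally reflexive case that was essentially handled in Lemma~\ref{l2} and Theorem~\ref{t1}, using the standard device of approximating $M$ by a totally reflexive module and a module of finite projective dimension, and approximating $N$ via a maximal Cohen--Macaulay approximation. More precisely, write an exact sequence
$$0\longrightarrow T\longrightarrow G\longrightarrow M\longrightarrow0,$$
where $\gd_R(G)=0$ and $\pd_R(T)=\gd_R(M)-1<\infty$ (possible since $\gd_R(M)<\infty$). First I would transport the hypotheses (i) and (ii) along this sequence: since $\pd_R(T)<\infty$, Theorem~\ref{tate}(i) gives $\Tt_i^R(T,N)=0$ for all $i$, so $\Tt_i^R(G,N)\cong\Tt_i^R(M,N)$ for all $i$ by \cite[Proposition~2.9]{CJ1}; and since the sequence is $\G$-proper (as $\pd_R(T)<\infty$), it induces a long exact sequence of relative homology ending in
$$0\to T\otimes_RN\to G\otimes_RN\to M\otimes_RN\to0$$
once one knows $\gt_1^R(M,N)=0$ (hypothesis (ii)), together with $\gt_i^R(G,N)=0$ for $i>0$ (automatic, $G$ totally reflexive) forcing $\Tor_i^R(T,N)\cong\gt_i^R(T,N)=0$ for all $i>0$ via Theorem~\ref{rel}(i).

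Next I would compute $\depth_R(T\otimes_RN)$ and $\depth_R(G\otimes_RN)$. For the first, since $\pd_R(T)<\infty$ and $\Tor_i^R(T,N)=0$ for $i>0$, Lemma~\ref{l1} (or the Auslander--Buchsbaum formula together with $\depth R$ being the relevant invariant) gives $\depth_R(T\otimes_RN)=\depth_R(N)-\pd_R(T)=\depth_R(N)-\gd_R(M)+1$. For the second, $G$ is totally reflexive and hypothesis (i) says $\Tt_i^R(G,N)=0$ for $-n\le i\le0$, so Lemma~\ref{l2} yields $\depth_R(G\otimes_RN)\ge\min\{n+1,\depth_R(N)\}=\depth_R(N)$; in fact I expect equality is not needed, only this lower bound. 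Feeding these into the short exact sequence $0\to T\otimes_RN\to G\otimes_RN\to M\otimes_RN\to0$ and using the depth inequalities for short exact sequences: from $\depth_R(G\otimes_RN)\ge\depth_R(N)$ and $\depth_R(T\otimes_RN)=\depth_R(N)-\gd_R(M)+1\le\depth_R(N)$ (here $\gd_R(M)\ge1$ is used crucially), the middle term has depth $\ge$ the minimum, and the standard analysis gives $\depth_R(M\otimes_RN)\ge\depth_R(T\otimes_RN)-1=\depth_R(N)-\gd_R(M)$, with equality provided $\depth_R(G\otimes_RN)>\depth_R(T\otimes_RN)$, which holds since $\gd_R(M)\ge1$.

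The one genuine subtlety — and the step I expect to be the main obstacle — is establishing the \emph{reverse} inequality $\depth_R(M\otimes_RN)\le\depth_R(N)-\gd_R(M)$, i.e. ruling out the ``or'' alternative that appears in Theorem~\ref{t1} and Theorem~\ref{t5}. This is exactly where the strict positivity of $\gd_R(M)$ pays off. From the short exact sequence, if $\depth_R(M\otimes_RN)$ were strictly larger than $\depth_R(T\otimes_RN)-1$, then $\depth_R(G\otimes_RN)$ would have to equal $\depth_R(T\otimes_RN)=\depth_R(N)-\gd_R(M)+1<\depth_R(N)$, contradicting the bound $\depth_R(G\otimes_RN)\ge\depth_R(N)$ from Lemma~\ref{l2}. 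Hence $\depth_R(M\otimes_RN)=\depth_R(N)-\gd_R(M)+1-1=\depth_R(N)-\gd_R(M)$, and by the Auslander--Bridger formula $\gd_R(M)=\depth R-\depth_R(M)$, this rearranges precisely to $\depth_R(M\otimes_RN)+\depth R=\depth_R(M)+\depth_R(N)$, as desired. The reduction ``without loss of generality $R$ has a canonical module'' (needed so that the maximal Cohen--Macaulay approximation machinery of \cite{AuBu} applies, should one wish to route $N$ through an approximation instead) is handled by completion as in the proof of Theorem~\ref{t1}, noting depth and all the homological hypotheses are insensitive to faithfully flat base change along $R\to\widehat{R}$.
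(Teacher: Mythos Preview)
Your argument has a genuine gap in the boundary case $\gd_R(M)=1$. You assert that $\depth_R(T\otimes_RN)=\depth_R(N)-\gd_R(M)+1<\depth_R(N)$ and that the strict inequality $\depth_R(G\otimes_RN)>\depth_R(T\otimes_RN)$ follows from $\gd_R(M)\ge1$; both of these actually require $\gd_R(M)>1$. When $\gd_R(M)=1$ the module $T$ is free, so $\depth_R(T\otimes_RN)=n$, while Lemma~\ref{l2} only yields $\depth_R(G\otimes_RN)\ge\min\{n+1,\depth_R(N)\}=n$. In the sequence $0\to T\otimes_RN\to G\otimes_RN\to M\otimes_RN\to0$ the left and middle terms may then both have depth exactly $n$, and no depth-lemma manipulation excludes $\depth_R(M\otimes_RN)\ge n$. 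Your proof is correct as written for $\gd_R(M)\ge2$, but the case $\gd_R(M)=1$ is left open.

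The paper avoids this by using the \emph{opposite} approximation: an exact sequence $0\to M\to P\to X\to0$ with $\pd_R(P)<\infty$ and $X$ totally reflexive (cf.\ \cite[3.3]{CI}). Here $\pd_R(P)=\gd_R(M)$, not $\gd_R(M)-1$, so once $\Tor_i^R(P,N)=0$ for all $i>0$ is known one obtains $\depth_R(P\otimes_RN)=n-\gd_R(M)<n$ \emph{strictly} for every $\gd_R(M)\ge1$. With $M\otimes_RN$ now on the \emph{left} in $0\to M\otimes_RN\to P\otimes_RN\to X\otimes_RN\to0$ and the middle term of strictly smaller depth than the right term ($\depth_R(X\otimes_RN)\ge n$ via Lemma~\ref{l2} after the shift $\Tt_i^R(M,N)\cong\Tt_{i+1}^R(X,N)$), the depth lemma forces $\depth_R(M\otimes_RN)=\depth_R(P\otimes_RN)$ directly. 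The price is that proving $\Tor_i^R(P,N)=0$ is no longer immediate from the relative long exact sequence: the paper first extracts $\Tor_1^R(P,N)=0$ from $\gt_1^R(M,N)=0$ via a pull-back diagram, and then invokes a maximal Cohen--Macaulay approximation of $N$ to obtain vanishing for all $i>0$. Your choice of approximation makes the Tor-vanishing step trivial but loses precisely the one unit of depth needed for the strict inequality.
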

\begin{proof}
Without loss of generality, we may assume that $R$ is Cohen-Macaulay ring with canonical module.
Consider the following exact sequence:
\begin{equation}\tag{\ref{t2}.1}
0\longrightarrow M\longrightarrow P\longrightarrow X\longrightarrow0,
\end{equation}
where $\pd_R(P)<\infty$ and $X$ is totally reflexive (see \cite[3.3]{CI}). As $P$ has finite projective dimension, $\Tt_i^R(P,N)=0$ for all $i\in\ZZ$ by Theorem \ref{tate}(i). Therefore, the exact sequence (\ref{t2}.1) induces the following isomorphism $\Tt_i^R(M,N)\cong\Tt_{i+1}^R(X,N)$ for all $i\in\ZZ$. In particular,
\begin{equation}\tag{\ref{t2}.2}
\Tt_i^R(X,N)=0 \text{ for } -n+1\leq i\leq 1,
\end{equation}
by (i). It follows from (\ref{t2}.2) and Lemma \ref{l2} that
\begin{equation}\tag{\ref{t2}.3}
\depth_R(X\otimes_RN)\geq\depth_R(N)=n.
\end{equation}
As $X$ is totally reflexive, by (\ref{t2}.2) and (\ref{t}),
\begin{equation}\tag{\ref{t2}.4}
\Tor_1^R(X,N)\cong\Tt_1^R(X,N)=0.
\end{equation}
Therefore, the exact sequence (\ref{t2}.1) induces the following exact sequence:
\begin{equation}\tag{\ref{t2}.5}
0\longrightarrow M\otimes_RN\longrightarrow P\otimes_RN\longrightarrow X\otimes_RN\longrightarrow0.
\end{equation}
Consider the exact sequence $0\rightarrow\Omega P\rightarrow F\rightarrow P\rightarrow0$, where $F$ is free. Taking the pull-back of the two maps into $P$, we get the following commutative diagram
$$\begin{CD}
&&&&&&&&\\
\ \ &&&& 0&&0\\
&&&& @VVV @VVV\\
&&&& \Omega P @>{=}>>\Omega P\\
&&&& @VVV @VVV\\
\ \ &&0@>>> Z@>>> F@>>> X@>>>0& \\
&&&& @VVV @VVV @V{\parallel}VV\\
\ \ &&0@>>> M @>>> P @>>> X @>>>0&  \\
&&&& @VVV  @VVV \\
\ \ &&&& 0&&0\\
\end{CD}$$
The usual properties of the pull-back and the Snake Lemma show that the rows and
columns of this diagram are exact.
Note that $Z$ is totally reflexive and so $\gt_i^R(Z,N)=0$ for all $i>0$.
As $\Omega P$ has finite projective dimension, the exact sequence $0\rightarrow\Omega P\rightarrow Z\rightarrow M\rightarrow0$ is proper. Therefore the above diagram
induces the following commutative diagram with exact rows
$$\begin{CD}
&&&&&&&&\\
\ \ &&0@>>>\gt_1^R(M,N)@>>>\Omega P\otimes_RN @>>> Z\otimes_RN& \\
&&&& @VVV @V{\parallel}VV @V{f}VV\\
\ \ &&0@>>>\Tor_1^R(P,N)@>>> \Omega P\otimes_RN  @>>> F\otimes_RN& \\
\end{CD}$$
It follows easily from (\ref{t2}.4) that $f$ is a monomorphism.
By (ii), $\gt_1^R(M,N)=0$. It follows from the above commutative diagram that
\begin{equation}\tag{\ref{t2}.6}
\Tor_1^R(P,N)=0.
\end{equation}
Now consider the maximal approximation of $N$,
\begin{equation}\tag{\ref{t2}.7}
0\longrightarrow Q\longrightarrow Y\longrightarrow N\longrightarrow0,
\end{equation}
where $Y$ is maximal Cohen-Macaulay and $Q$ has finite injective dimension.
The exact sequence (\ref{t2}.1) induces the following long exact sequence
\begin{equation}\tag{\ref{t2}.8}
\cdots\rightarrow\Tor_i^R(M,Q)\rightarrow\Tor_i^R(P,Q)\rightarrow\Tor_i^R(X,Q)\rightarrow\cdots.
\end{equation}
As we have seen in the proof of Theorem \ref{t1},
\begin{equation}\tag{\ref{t2}.9}
\Tor_i^R(M,Q)\cong\gt_i^R(M,Q)=0 \text{ for all } i>0.
\end{equation}
Since $\id_R(Q)<\infty$ and $X$ is totally reflexive,
\begin{equation}\tag{\ref{t2}.10}
\Tor_i^R(X,Q)\cong\gt_i^R(X,Q)=0 \text{ for all } i>0,
\end{equation}
by Theorem \ref{rel}(iv). It follows from (\ref{t2}.8), (\ref{t2}.9) and (\ref{t2}.10) that
\begin{equation}\tag{\ref{t2}.11}
\Tor_i^R(P,Q)=0 \text{ for all } i>0.
\end{equation}
As $Y$ is maximal Cohen-Macaulay and $P$ has finite projective dimension,
\begin{equation}\tag{\ref{t2}.12}
\Tor_i^R(P,Y)=0 \text{ for all } i>0.
\end{equation}
by \cite[Lemma 2.2]{Yo}. The exact sequence (\ref{t2}.7) induces the following long exact sequence
\begin{equation}\tag{\ref{t2}.13}
\cdots\rightarrow\Tor_i^R(P,Y)\rightarrow\Tor_i^R(P,N)\rightarrow\Tor_{i-1}^R(P,Q)\rightarrow\cdots.
\end{equation}
It follows from (\ref{t2}.6), (\ref{t2}.11), (\ref{t2}.12) and (\ref{t2}.13) that
$\Tor_i^R(P,N)=0$ for all $i>0$. Hence, by Lemma
\ref{l1}, and the Auslander-Buchsbaum formula,
\[\begin{array}{rl} \tag{\ref{t2}.14}
\depth_R(P\otimes_RN)=\depth_R(N)-\pd_R(P)\\
=\depth_R(N)-\gd_R(M)<\depth_R(N).
\end{array}\]
It follows from (\ref{t2}.3), (\ref{t2},14) and the exact sequence
(\ref{t2},5) that
$$\depth_R(M\otimes_RN)=\depth_R(P\otimes_RN)=\depth_R(N)-\gd_R(M).$$ Therefore the assertion follows from the Auslander-Bridger formula.
\end{proof}
\begin{thm}\label{t4}
Let $R$ be a Cohen-Macaulay local ring of dimension $d$ and let $M$, $N$ be $R$--modules. Assume that $M$ is totally reflexive. If $\Tt_i^R(M,N)=0$ for $-d\leq i\leq 0$, then
$\depth_R(M\otimes_RN)+\depth R=\depth_R(M)+\depth_R(N)$.
\end{thm}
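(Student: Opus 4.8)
The plan is to induct on $d=\dim R$. Since $M$ is totally reflexive, $\gd_R(M)=0$, and the Auslander--Bridger formula (Theorem~\ref{G}(ii)) gives $\depth_R(M)=\depth R=d$; hence the asserted equality is equivalent to the single equality $\depth_R(M\otimes_RN)=\depth_R(N)$. The inequality ``$\geq$'' is immediate from Lemma~\ref{l2} applied with $n=d$, because $\min\{d+1,\depth_R(N)\}=\depth_R(N)$ (as $\depth_R(N)\leq\dim R=d$); so everything comes down to proving $\depth_R(M\otimes_RN)\leq\depth_R(N)$. When $d=0$ there is nothing to prove. When $d\geq1$ and $\depth_R(N)=0$, the argument is direct: as in the proof of Lemma~\ref{l2} the hypothesis yields an isomorphism $M\otimes_RN\cong\Hom_R(M^*,N)$, and since $\Ass_R\Hom_R(M^*,N)=\Supp_R(M^*)\cap\Ass_R(N)$ with $M^*\neq0$ and $\fm\in\Ass_R(N)$, we get $\fm\in\Ass_R(M\otimes_RN)$, i.e. $\depth_R(M\otimes_RN)=0$.

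For the inductive step, assume $d\geq1$ and $\depth_R(N)\geq1$; then $\depth_R(M\otimes_RN)\geq1$ as well by the ``$\geq$'' inequality, so by prime avoidance I can choose $x\in\fm$ lying outside $\Ass R\cup\Ass_R(N)\cup\Ass_R(M\otimes_RN)$. As $M$ is totally reflexive over the Cohen--Macaulay ring $R$ it is maximal Cohen--Macaulay, so $\Ass_R(M)\subseteq\operatorname{Min}(R)=\Ass R$ and $x$ is a nonzerodivisor on $M$ as well. Put $\bar R=R/xR$, $\bar M=M/xM$, $\bar N=N/xN$; then $\bar R$ is Cohen--Macaulay local of dimension $d-1$ and $\bar M$ is totally reflexive over $\bar R$. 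The crucial point is a change-of-rings statement for Tate homology: in a complete resolution $\mathbf{T}\to\mathbf{P}\to M$ of $M$, every cycle of $\mathbf{T}$ and of $\Hom_R(\mathbf{T},R)$ is totally reflexive, hence maximal Cohen--Macaulay, so $x$ is a nonzerodivisor on each of them; it follows that $\mathbf{T}\otimes_R\bar R\to\mathbf{P}\otimes_R\bar R\to\bar M$ is a complete resolution of $\bar M$ over $\bar R$, and hence $\Tt_i^{\bar R}(\bar M,\bar N)\cong\Tt_i^R(M,\bar N)$ for all $i\in\ZZ$. Feeding $0\to N\xrightarrow{x}N\to\bar N\to0$ into the long exact sequence of Tate homology and using $\Tt_i^R(M,N)=0$ for $-d\leq i\leq0$ gives $\Tt_i^R(M,\bar N)=0$ for $-d+1\leq i\leq0$, that is, $\Tt_i^{\bar R}(\bar M,\bar N)=0$ for $-(d-1)\leq i\leq0$. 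Thus the pair $(\bar M,\bar N)$ over $\bar R$ satisfies the hypotheses of the theorem in dimension $d-1$.

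By the induction hypothesis the depth formula holds for $(\bar M,\bar N)$ over $\bar R$. Since $\bar M\otimes_{\bar R}\bar N\cong(M\otimes_RN)/x(M\otimes_RN)$ and $x$ is a nonzerodivisor on each of $R$, $M$, $N$ and $M\otimes_RN$, every one of the four terms of that formula is exactly one less than its counterpart over $R$; cancelling the $-1$'s yields $\depth_R(M\otimes_RN)+\depth R=\depth_R(M)+\depth_R(N)$, as wanted. I expect the genuinely delicate point to be this change-of-rings step — verifying that $\mathbf{T}\otimes_R\bar R$ stays totally acyclic over $\bar R$ (which is exactly where maximal Cohen--Macaulayness of $M$, and hence of all cycles of $\mathbf{T}$ and of $\Hom_R(\mathbf{T},R)$, is used: it forces the $R$-regular element $x$ to be regular on the entire complete resolution and on its $R$-dual) and that Tate homology commutes with this reduction. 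The remaining ingredients are routine uses of the depth lemma, prime avoidance, and the exact sequence (\ref{1.2}).
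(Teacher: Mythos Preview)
Your argument is correct and takes a genuinely different route from the paper's. The paper passes to a ring with canonical module and uses the Auslander--Buchweitz hull $0\to N\to Q\to X\to 0$ with $\id_R(Q)<\infty$ and $X$ maximal Cohen--Macaulay: the depth of $M\otimes_RQ$ is computed via Lemma~\ref{l1} (since $\Tor_{>0}^R(M,Q)=0$), the depth of $M\otimes_RX$ equals $d$ by Lemma~\ref{l2}, and the short exact sequence $0\to M\otimes_RN\to M\otimes_RQ\to M\otimes_RX\to 0$ then forces $\depth_R(M\otimes_RN)=\depth_R(N)$. Your proof instead inducts on $d$ by reducing modulo a nonzerodivisor, the key technical input being the change-of-rings isomorphism $\Tt_i^{\bar R}(\bar M,\bar N)\cong\Tt_i^R(M,\bar N)$ coming from the observation that every cycle of a totally acyclic complex is totally reflexive, hence maximal Cohen--Macaulay over the Cohen--Macaulay ring $R$. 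Your separate treatment of the case $\depth_R(N)=0$ via $M\otimes_RN\cong\Hom_R(M^*,N)$ and $\Ass_R\Hom_R(M^*,N)=\Supp_R(M^*)\cap\Ass_R(N)$ is also sound. The trade-off: the paper's approach reuses the approximation machinery already assembled for Theorems~\ref{t1}, \ref{t5}, and~\ref{t2}, and never leaves the fixed ring $R$; your approach is more self-contained (no canonical module, no approximation theory) but requires justifying the base-change for complete resolutions, which the paper's preliminaries do not provide explicitly, though your sketch of it is accurate.
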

\begin{proof}
Without loss of generality, we may assume that $R$ is Cohen-Macaulay ring with canonical module.
Consider the following exact sequence:
\begin{equation}\tag{\ref{t4}.1}
0\longrightarrow N\longrightarrow Q\longrightarrow X\longrightarrow0,
\end{equation}
where $\id_R(Q)<\infty$ and $X$ is maximal Cohen-Macaulay (see \cite{AuBu}).
The exact sequence (\ref{t4}.1) induces the following long exact sequence
\begin{equation}\tag{\ref{t4}.2}
\cdots\rightarrow\Tt_{i+1}^R(M,X)\rightarrow\Tt_i^R(M,N)\rightarrow\Tt_i^R(M,Q)\rightarrow\cdots,
\end{equation}
of Tate homology modules by \cite[Proposition 2.8]{CJ1}. As $\id_R(Q)<\infty$, $\Tt_i^R(M,Q)=0$ for all $i\in\ZZ$ by Theorem \ref{tate}(iv). Therefore, $\Tt_{i+1}^R(M,X)\cong\Tt_i^R(M,N)$ for all $i\in\ZZ$ by (\ref{t4}.2).
In particular, by our assumption
\begin{equation}\tag{\ref{t4}.3}
\Tt_i^R(M,X)=0 \text{ for } -d<i\leq 1.
\end{equation}
As $M$ is totally reflexive, $\Tor_1^R(M,X)=0$ by (\ref{t}) and (\ref{t4}.3). Therefore, the exact sequence (\ref{t4}.1) induces the following
exact sequence
\begin{equation}\tag{\ref{t4}.4}
0\longrightarrow M\otimes_RN\longrightarrow M\otimes_RQ\longrightarrow M\otimes_RX\longrightarrow0.
\end{equation}
As $Q$ has finite injective dimension and $M$ is totally reflexive, $\Tor_i^R(M,Q)\cong\gt_i^R(M,Q)=0$ for all $i>0$ by Theorem \ref{rel}(iv) and Lemma \ref{l3}.
Therefore, 
\begin{equation}\tag{\ref{t4}.5}
\depth_R(M\otimes_RQ)+d=\depth_R(M)+\depth_R(Q),
\end{equation}
by Lemma \ref{l1}. Note that, by Auslander-Bridger formula, $M$ is maximal Cohen-Macaulay. Hence, it follows from (\ref{t4}.5) and the exact sequence (\ref{t4}.1) that 
\begin{equation}\tag{\ref{t4}.6}
\depth_R(M\otimes_RQ)=\depth_R(Q)=\depth_R(N).
\end{equation}
On the other hand, by (\ref{t4}.3) and Lemma \ref{l2},
\begin{equation}\tag{\ref{t4}.7}
\depth_R(M\otimes_RX)=d.
\end{equation}
It follows from (\ref{t4}.6), (\ref{t4}.7) and the exact sequence (\ref{t4}.4) that $\depth_R(M\otimes_RN)=\depth_R(N)$. In other words, the pair $(M,N)$ satisfies the depth formula.
\end{proof}
Note that every finitely generated module over a Gorenstein local ring has finite Gorenstein dimension. Therefore, Theorem \ref{MT} is a special case of the following result.
\begin{cor}\label{c1}
Let $R$ be a Cohen-Macaulay local ring and let $M$ and $N$ be
$R$--modules such that $M$ has finite Gorenstein dimension. Set
$m=\max\{\depth_R(M), \depth_R(N)\}$ and $n=\gd_R(M)$. Assume that
the following conditions hold:
\begin{enumerate}[(i)]
\item{$\Tt_i^R(M,N)=0$ for $-m\leq i\leq 0$.}
\item{If $n>0$, assume $\gt_i^R(M,N)=0$ for $1\leq i\leq n$.}
\end{enumerate}
Then $\depth_R(M\otimes_RN)+\depth R=\depth_R(M)+\depth_R(N)$.
\end{cor}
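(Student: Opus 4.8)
The plan is to reduce to the two main theorems of this section, Theorem \ref{t2} and Theorem \ref{t4}, by splitting $M$ according to whether its Gorenstein dimension $n$ is zero or positive. First I would reduce to the case where $R$ admits a canonical module, as in the other proofs, so that the maximal Cohen--Macaulay approximations used by Theorem \ref{t2} and Theorem \ref{t4} are available. Next, observe that if $m=\max\{\depth_R(M),\depth_R(N)\}$ then in particular $m\geq\depth_R(N)$, so condition (i) forces $\Tt_i^R(M,N)=0$ for $-\depth_R(N)\leq i\leq 0$, which is exactly hypothesis (i) of Theorem \ref{t2}; and by the Auslander--Bridger formula $\depth_R(M)=\depth R-n$, so when $R$ is Cohen--Macaulay of dimension $d$ we have $\depth_R(M)=d-n$ and hence $m\geq d-n$, which will be needed to feed Theorem \ref{t4}.

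The case split: if $n>0$, then hypotheses (i) and (ii) of the corollary give precisely the hypotheses of Theorem \ref{t2} (using the reduction of the previous paragraph to see that $\Tt_i^R(M,N)=0$ for $-\depth_R(N)\leq i\leq 0$, and noting that $\gt_i^R(M,N)=0$ for $1\leq i\leq n$ is equivalent to vanishing for all $i>0$ since $\gt_i^R(M,N)=0$ automatically for $i>\gd_R(M)=n$). Theorem \ref{t2} then yields the depth formula directly. If $n=0$, then $M$ is totally reflexive and condition (ii) is vacuous; here I would like to apply Theorem \ref{t4}, but that theorem demands the vanishing of $\Tt_i^R(M,N)$ for the full range $-d\leq i\leq 0$, whereas condition (i) only supplies the range $-m\leq i\leq 0$ with $m=\max\{\depth_R(M),\depth_R(N)\}$. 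When $n=0$ we have $\depth_R(M)=d$, so $m=d$ and the ranges coincide: condition (i) becomes exactly the hypothesis of Theorem \ref{t4}, and we conclude.

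The main obstacle is precisely this last point — matching the index ranges in the totally reflexive case — and the resolution is the observation that $n=0$ forces $M$ to be maximal Cohen--Macaulay by the Auslander--Bridger formula, so $m$ is forced up to $d$ and no gap remains. A secondary point to check carefully is the equivalence "$\gt_i^R(M,N)=0$ for $1\leq i\leq n$" $\iff$ "$\gt_i^R(M,N)=0$ for all $i>0$" when $n=\gd_R(M)$, which follows from the general fact, noted in Section 2, that $\gt_i^R(M,N)=0$ for all $i>\gd_R(M)$; with this in hand the reduction to Theorem \ref{t2} in the case $n>0$ is immediate. Finally, if $d=\dim R$ exceeds the common depth in degenerate situations one still only uses the stated ranges, so no further case analysis is required.
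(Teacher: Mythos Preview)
Your proposal is correct and follows essentially the same route as the paper: split on whether $n=\gd_R(M)$ is zero or positive, invoke Theorem~\ref{t4} in the first case (using the Auslander--Bridger formula to force $m=d$) and Theorem~\ref{t2} in the second (using that $\gt_i^R(M,N)$ vanishes automatically for $i>n$). The preliminary reduction to a ring with canonical module is unnecessary here, since it is already performed inside Theorems~\ref{t2} and~\ref{t4}, but it does no harm.
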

\begin{proof}
If $n=0$ then $M$ is maximal Cohen-Macaulay by Auslander-Bridger formula and so $m=\dim R$. Hence, the assertion follows from Theorem \ref{t4}.
If $n>0$ then the assertion is clear by Theorem \ref{t2}.
\end{proof}

As an application, we have the following result.
\begin{cor}\label{c2}
Let $R$ be a Cohen-Macaulay local ring and let $M$ and $N$ be
$R$--modules such that $M$ has finite Gorenstein dimension. Set $m=\max\{\depth_R(M), \depth_R(N)\}$ and $n=\gd_R(M)$. Assume that
the following conditions hold:
\begin{enumerate}[(i)]
\item{$\Tt_i^R(M,N)=0$ for $-m\leq i\leq n$.}
\item{If $n>0$, assume $\Tor_i^R(M,N)=0$ for $1\leq i\leq n$.}
\end{enumerate}
Then $\depth_R(M\otimes_RN)+\depth R=\depth_R(M)+\depth_R(N)$.
\end{cor}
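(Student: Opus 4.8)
The plan is to deduce this from Corollary \ref{c1}. Since $m=\max\{\depth_R(M),\depth_R(N)\}$ is nonnegative, the range $-m\le i\le n$ in hypothesis (i) contains the range $-m\le i\le 0$, so hypothesis (i) here implies hypothesis (i) of Corollary \ref{c1}. If $n=0$ there is nothing further to check and Corollary \ref{c1} applies directly (that case is Theorem \ref{t4}); so assume $n>0$, and it remains only to verify hypothesis (ii) of Corollary \ref{c1}, namely that $\gt_i^R(M,N)=0$ for $1\le i\le n$.

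To obtain this vanishing I would invoke the Avramov--Martsinkovsky exact sequence relating the Tate, absolute, and $\G$-relative homology of $M$ and $N$, where $M$ has finite Gorenstein dimension --- the homological analogue of \cite[Theorem 7.1]{AvM}, whose tail is the exact sequence recorded in Theorem \ref{rel}(ii). For each $i\ge 2$ this long exact sequence contains the segment
\[\Tor_i^R(M,N)\longrightarrow\gt_i^R(M,N)\longrightarrow\Tt_{i-1}^R(M,N),\]
and its tail reads $\gt_2^R(M,N)\to\Tt_1^R(M,N)\to\Tor_1^R(M,N)\to\gt_1^R(M,N)\to0$. For $i=1$, the surjection $\Tor_1^R(M,N)\twoheadrightarrow\gt_1^R(M,N)$ obtained from this tail, together with $\Tor_1^R(M,N)=0$ from hypothesis (ii), forces $\gt_1^R(M,N)=0$. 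For $2\le i\le n$, exactness of the displayed segment at $\gt_i^R(M,N)$ finishes the job: its left-hand term vanishes by hypothesis (ii) since $1\le i\le n$, and its right-hand term vanishes by hypothesis (i) since $-m\le i-1\le n$. As $\gt_i^R(M,N)$ also vanishes automatically for $i>n=\gd_R(M)$, hypothesis (ii) of Corollary \ref{c1} is met, and that corollary yields the depth formula.

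I do not expect a genuine obstacle here: the content lies entirely in the reduction to Corollary \ref{c1}, and the intermediate step is a short diagram chase in the Avramov--Martsinkovsky sequence. The only thing to watch is the index bookkeeping --- making sure each Tate homology term $\Tt_{i-1}^R(M,N)$ invoked has its index in the prescribed interval $[-m,n]$ --- and, conceptually, that the extra positive-degree Tate vanishing imposed by hypothesis (i) is precisely what upgrades the absolute $\Tor$-vanishing of hypothesis (ii) to the vanishing of $\G$-relative homology that Corollary \ref{c1} actually uses as input.
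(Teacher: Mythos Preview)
Your proposal is correct and follows essentially the same route as the paper: reduce to Corollary \ref{c1} by using the Avramov--Martsinkovsky long exact sequence (the full sequence indicated by the ``$\cdots$'' in Theorem \ref{rel}(ii)) together with hypotheses (i) and (ii) to obtain $\gt_i^R(M,N)=0$ for $1\le i\le n$, handling the case $n=0$ via Theorem \ref{t4}. The paper compresses this into a single sentence citing Theorem \ref{rel}(ii), while you spell out the index bookkeeping explicitly.
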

\begin{proof}
If $n=0$, then $\gt_i^R(M,N)=0$ for all $i>0$ and $M$ is maximal Cohen-Macaulay by Auslander-Bridger formula. Hence, the assertion is clear by Theorem \ref{t4}.
Assume that $n>0$. It follows from (i), (ii) and Theorem \ref{rel}(ii) that $\gt_i^R(M,N)=0$ for $1\leq i\leq n$. Now the assertion is clear by Corollary \ref{c1}.
\end{proof}
We finish this section by proving a vanishing result for local rings that are not necessarily Cohen-Macaulay.

\begin{thm}\label{t3}
Let $R$ be a local ring of depth $d$ and let $M$ and $N$ be
$R$--modules of finite Gorenstein dimension. Assume that the
following conditions hold:
\begin{enumerate}[(i)]
\item{$\Tt_i^R(M,N)=0$ for $-d\leq i\leq 0$.}
\item{$\gt_i^R(M,N)=0$ for $1\leq i\leq d$.}
\end{enumerate}
Then either $\depth_R(M\otimes_RN)+\depth R=\depth_R(M)+\depth_R(N)$ or $\depth_R(M\otimes_RN)>d$.
\end{thm}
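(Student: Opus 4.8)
The plan is to argue by induction on $\dim R$, using the Cohen--Macaulay case as the base and reducing the general case to it by cutting down with a single regular element. Three reductions cost nothing. First, $R\to\widehat R$ is faithfully flat and preserves depth, Gorenstein dimension, the module $M\otimes_RN$, and the functors $\Tt$ and $\gt$, so we may assume $R$ is complete. Second, by the Auslander--Bridger formula $\gd_R(M)=\depth R-\depth_R(M)\le d$, whence $\gt_i^R(M,N)=0$ for all $i>\gd_R(M)$; thus hypothesis (ii) is equivalent to the vanishing of $\gt_i^R(M,N)$ for \emph{all} $i>0$, and by Theorem \ref{rel}(iii) and Theorem \ref{tate}(ii) both hypotheses are symmetric in $M$ and $N$. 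Third, if $R$ is Cohen--Macaulay then $d=\dim R$, hypothesis (i) forces $\Tt_i^R(M,N)=0$ for $-\max\{\depth_RM,\depth_RN\}\le i\le 0$, and Corollary \ref{c1} yields the depth formula; this disposes of $\dim R=0$ and of every Cohen--Macaulay $R$.

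For the inductive step assume $\dim R\ge1$, that $R$ is not Cohen--Macaulay (so $d<\dim R$), and that the theorem is known in smaller dimension. If $\depth_R(M\otimes_RN)>d$ we are in the second alternative; otherwise assume $\depth_R(M\otimes_RN)\le d$ and aim for the depth formula. If $\depth R=0$ then $\depth_RM=\depth_RN=0$ and the depth formula reads $\depth_R(M\otimes_RN)=0$, which holds. So assume $\depth R\ge1$ and, to begin with, that $\depth_RM,\depth_RN$ and $\depth_R(M\otimes_RN)$ are all positive. By prime avoidance choose $x\in\fm$ that is a nonzerodivisor on each of $R$, $M$, $N$ and $M\otimes_RN$, and set $\overline R=R/xR$, $\overline M=M/xM$, $\overline N=N/xN$, so $\overline M\otimes_{\overline R}\overline N\cong(M\otimes_RN)/x(M\otimes_RN)$, $\dim\overline R=\dim R-1$, each of the four depths drops by exactly one, and $\gd_{\overline R}(\overline M)=\gd_R(M)$. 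The hypotheses descend: from the change-of-rings isomorphisms $\Tt_i^{\overline R}(\overline M,\overline N)\cong\Tt_i^R(M,\overline N)$ and $\Tor_i^{\overline R}(\overline M,\overline N)\cong\Tor_i^R(M,\overline N)$ together with the long exact sequences obtained by applying $M\otimes_R-$ and $\mathbf T\otimes_R-$ to $0\to N\xrightarrow{x}N\to\overline N\to 0$, hypothesis (i) over $R$ (in particular $\Tt_0^R(M,N)=\Tt_{-1}^R(M,N)=0$) cancels the overlapping low-degree terms and gives hypothesis (i) over $\overline R$; and since hypothesis (ii) over $R$ says (via Theorem \ref{rel}(ii)) that $\Tt_i^R(M,N)\to\Tor_i^R(M,N)$ is an isomorphism for all $i\ge1$, the same sequences — now using also that $x$ is $M\otimes_RN$-regular — show $\Tt_i^{\overline R}(\overline M,\overline N)\to\Tor_i^{\overline R}(\overline M,\overline N)$ is an isomorphism for all $i\ge1$, i.e.\ hypothesis (ii) over $\overline R$. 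Since $\depth_{\overline R}(\overline M\otimes_{\overline R}\overline N)=\depth_R(M\otimes_RN)-1\le d-1=\depth\overline R$, the inductive hypothesis forces the depth formula over $\overline R$, which after restoring the four units is the depth formula over $R$.

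It remains to handle the configurations with $\depth R\ge1$ in which one of $\depth_RM$, $\depth_RN$, $\depth_R(M\otimes_RN)$ vanishes, where no suitable regular element exists. Here the key device is the decomposition $0\to T\to G\to M\to 0$ with $G$ totally reflexive and $\pd_RT=\gd_R(M)-1$, available whenever $\gd_RM\ge1$, as in the proof of Theorem \ref{t5}. Since $T$ has finite projective dimension, $\gt_i^R(T,N)=\Tor_i^R(T,N)$, and from $\gt_i^R(M,N)=0=\gt_i^R(G,N)$ for $i>0$ one gets $\Tor_i^R(T,N)=0$ for all $i>0$, hence (Lemma \ref{l1} and the Auslander--Buchsbaum formula) $\depth_R(T\otimes_RN)=\depth_RN-\gd_R(M)+1$; moreover $\Tt_i^R(G,N)\cong\Tt_i^R(M,N)=0$ for $-d\le i\le 0$, so Lemma \ref{l2} gives $\depth_R(G\otimes_RN)\ge\depth_RN$, and the short exact sequence $0\to T\otimes_RN\to G\otimes_RN\to M\otimes_RN\to 0$ together with the equality case of the depth lemma pins down $\depth_R(M\otimes_RN)$ and yields the depth formula unless $\gd_R(M)=0$; the symmetric argument with $N$ then also allows one to assume $\gd_R(N)=0$. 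The finitely many remaining possibilities — $M$ and $N$ both totally reflexive, or one totally reflexive — are dispatched by applying Lemma \ref{l2} directly (for $M$ totally reflexive, $\Tt_i^R(M,N)=0$ for $-d\le i\le 0$ gives $\depth_R(M\otimes_RN)\ge\min\{d+1,\depth_RN\}$, which is $\ge d$ and forces the depth formula when $\depth_RN=d$), the low-dimensional borderline cases being precisely where the alternative $\depth_R(M\otimes_RN)>d$ is invoked (or where one checks such modules cannot satisfy the hypotheses).

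I expect the main obstacle to be this last, residual, analysis: unlike the generic inductive step it cannot be handled by a clean dimension reduction, and one must instead squeeze the exact value of $\depth_R(M\otimes_RN)$ out of the short exact sequences above by a careful bookkeeping with the equality case of the depth lemma, using the slack in the conclusion $\depth_R(M\otimes_RN)>d$ exactly for the degenerate low-depth, low-dimension configurations. A secondary technical point is to verify in full the change-of-rings comparisons for Tate and $\mathcal G$-relative homology along $R\to R/xR$ used in the inductive step; these follow the familiar pattern for ordinary $\Tor$ once one knows $x$ is regular on $R$, $M$, $N$ and $M\otimes_RN$, but for $\gt$ it is worth recording the comparison explicitly, since $\mathcal G$-proper resolutions are less rigid than projective ones.
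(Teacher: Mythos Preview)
Your route is quite different from the paper's and carries a real gap. The paper does \emph{not} induct on $\dim R$ or pass to $R/xR$ at all. Its proof is a two-line reduction: if $\gd_RM=\gd_RN=0$, Lemma~\ref{l2} with $n=d$ gives $\depth_R(M\otimes_RN)\ge\min\{d+1,\depth_RN\}=d$, and Auslander--Bridger finishes; otherwise, say $\gd_RM>0$, take a totally reflexive approximation $0\to Q\to Y\to N\to 0$ with $\pd_RQ<\infty$ (this exists for any $N$ of finite Gorenstein dimension over any local ring, no canonical module needed) and rerun the argument of Theorem~\ref{t2} verbatim with this sequence in place of the maximal Cohen--Macaulay approximation~(\ref{t2}.7). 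No regular element, no change-of-rings for $\gt$, no residual case analysis.

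The gap in your plan is in the ``residual'' step, and it is exactly the one you flagged as the main obstacle. You use the cover $0\to T\to G\to M\to 0$ with $\pd_RT=\gd_RM-1$, which gives $\depth_R(T\otimes_RN)=\depth_RN-\gd_RM+1$ and $\depth_R(G\otimes_RN)\ge\depth_RN$. When $\gd_RM\ge 2$ the first is \emph{strictly} smaller than the second, and the depth lemma on $0\to T\otimes_RN\to G\otimes_RN\to M\otimes_RN\to 0$ does pin down $\depth_R(M\otimes_RN)=\depth_RN-\gd_RM$. But when $\gd_RM=1$ you only get $\depth_R(T\otimes_RN)=\depth_RN\le\depth_R(G\otimes_RN)$, and the depth lemma yields merely $\depth_R(M\otimes_RN)\ge\depth_RN-1$; nothing forces equality, and the escape clause $\depth_R(M\otimes_RN)>d$ does not obviously apply. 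Your sentence ``yields the depth formula unless $\gd_R(M)=0$'' is therefore too strong. The symmetric argument in $N$ runs into the same wall at $\gd_RN=1$.

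The paper sidesteps this by using the \emph{hull} $0\to M\to P\to X\to 0$ (with $\pd_RP=\gd_RM$ and $X$ totally reflexive) rather than the cover. Now the middle term has $\depth_R(P\otimes_RN)=\depth_RN-\gd_RM$, which is \emph{strictly} less than $\depth_R(X\otimes_RN)\ge\depth_RN$ for every $\gd_RM\ge 1$, so the depth lemma on $0\to M\otimes_RN\to P\otimes_RN\to X\otimes_RN\to 0$ forces $\depth_R(M\otimes_RN)=\depth_RN-\gd_RM$ outright. (Along the way one gets $\Tor_i^R(P,N)=0$ for all $i>0$, whence $\depth_RN\ge\gd_RM$; this is what rules out the awkward low-depth configurations you were worried about.) Once you adopt this decomposition, the whole induction on $\dim R$---and with it the unverified change-of-rings for $\gt$---becomes unnecessary.
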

\begin{proof}
Assume that $\depth_R(M\otimes_RN)\leq d$. We must prove that $(M,N)$ satisfies the depth formula. If $M$ and $N$ are totally reflexive, then the assertion follows from Lemma \ref{l2} and Auslander-Bridger formula. Assume that either $M$ or $N$ is not totally reflexive.
Note that by Theorem \ref{tate}(ii) and Theorem \ref{rel}(iii), Tate and relative homology are balanced for modules of finite Gorenstein dimension.
Therefore, without loss of generality we may assume that $\gd_R(M)>0$. Consider the following exact sequence:
\begin{equation}\tag{\ref{t3}.1}
0\longrightarrow Q\longrightarrow Y\longrightarrow N\longrightarrow0,
\end{equation}
where $\pd_R(Q)<\infty$ and $Y$ is totally reflexive. By applying the exact sequence (\ref{t3}.1) instead of
the exact sequence (\ref{t2}.7) in the proof of Theorem
\ref{t2}, similarly one can deduce the assertion.
\end{proof}
\section*{Acknowledgments}
I am grateful to Olgur Celikbas for his comments on an earlier version of this paper and for his help in finding Example \ref{e1}.
\bibliographystyle{amsplain}

\end{document}